\newtheorem{theorem}{Theorem}
\newtheorem{lemma}[theorem]{Lemma}
\newtheorem{conjecture}[theorem]{Conjecture}
\newtheorem{corollary}[theorem]{Corollary}
\theoremstyle{remark}
\newtheorem{remark}[theorem]{Remark}
\theoremstyle{definition}
\newcommand{\scrE}{\mathcal{E}}
\newcommand{\scrP}{\mathcal{P}}
\newcommand{\scrQ}{\mathcal{Q}}
\newcommand{\EE}{\mathbb{E}}
\newcommand{\FF}{\mathbb{F}}
\newcommand{\RR}{\mathbb{R}}
\newcommand{\AG}{\text{AG}}
\newcommand{\PGL}{\text{PGL}}
\newcommand{\gauss}[2]{\genfrac{[}{]}{0pt}{}{#1}{#2}}
\title{Remarks on the Erd\H{o}s Matching Conjecture for Vector Spaces}
\author{Ferdinand Ihringer\thanks{Department of Mathematics: Analysis, Logic and Discrete Mathematics, Ghent University, Belgium, \href{mailto:ferdinand.ihringer@ugent.be}{ferdinand.ihringer@ugent.be}.
}}
\begin{document}

\maketitle

\begin{abstract}
In 1965, Paul Erd\H{o}s asked about the largest family $Y$ of $k$-sets in $\{ 1, \ldots, n \}$ such that
$Y$ does not contain $s+1$ pairwise disjoint sets. This problem is commonly known as the Erd\H{o}s Matching
Conjecture. We investigate the $q$-analog of this question, that is 
we want to determine the size of a largest family $Y$ of $k$-spaces in $\mathbb{F}_q^n$
such that $Y$ does not contain $s+1$ pairwise disjoint $k$-spaces.
Here we call two subspaces disjoint if they intersect trivially.

Our main result is, slightly simplified, that if $16 s \leq \min\{  q^{\frac{n-k}{4}},$ $q^{\frac{n-2k+1}{3}} \}$, 
then $Y$ is either small or a union of intersecting families. Thus we show the
Erd\H{os} Matching Conjecture for this range. The proof uses a method due to Metsch.
We also discuss constructions. In particular, we show that for larger $s$, 
there are large examples which are close in size to a union of intersecting
families, but structurally different.

As an application, we discuss the close relationship between the Erd\H{o}s Matching Conjecture 
for vector spaces and Cameron-Liebler line classes (and their generalization to $k$-spaces), a popular
topic in finite geometry for the last 30 years. More specifically, we propose the Erd\H{o}s Matching Conjecture (for vector spaces)
as an interesting variation of the classical research on Cameron-Liebler line classes.
\end{abstract}

\section{Introduction}

In 1961, Erd\H{o}s, Ko, and Rado famously showed that an \textit{intersecting family} of $k$-sets in $\{ 1, \ldots, n \}$
has at most size $\binom{n-1}{k-1}$ and, if $n > 2k$, consists of all $k$-sets which contain a fixed element in the case of equality \cite{ErdHos1961}.
Hence, intersecting families are families of $k$-sets with no $2$ of its elements pairwise disjoint and we know the largest such families. 
If we replace $2$ by a parameter $s+1$, then we obtain the setting of the Erd\H{o}s Matching Conjecture from 1965 \cite{Erdos1965}. Let us say that a family without $s+1$ pairwise disjoint elements is an $s$-EM-family.
There are two natural choices for $s$-EM-families of $k$-sets in $\{ 1, \ldots, n \}$.
The first one, let us call it $Y_1$, is the family of $k$-sets which intersect $\{ 1, \ldots, s \}$
non-trivially. The family $Y_1$ has size $\binom{n}{k} - \binom{n-s}{k}$. 
The second one, let us call it $Y_2$, is the family of $k$-sets which are contained in $\{ 1, \ldots, k(s+1)-1\}$.
The family $Y_2$ has size $\binom{k(s+1)-1}{k}$.
Erd\H{o}s states in \cite{Erdos1965} that the following ``is not impossible'':
\begin{conjecture}[The Erd\H{o}s Matching Conjecture]\label{conj:em1}
  Let $Y$ be a largest $s$-EM-family of $k$-sets of $\{ 1, \ldots, n \}$. 
  Then $|Y| = \max\{ |Y_1|, |Y_2| \}$.
\end{conjecture}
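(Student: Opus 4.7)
The plan is to proceed by shifting. The standard $(i,j)$-shift applied to an $s$-EM-family of $k$-sets of $\{1,\ldots,n\}$ preserves the property of containing no $s+1$ pairwise disjoint sets, so after iterating we may assume $Y$ is left-compressed. Under this reduction, $\{1,\ldots,k\} \in Y$ whenever $Y$ is nonempty, and membership in $Y$ becomes monotone under componentwise decrease of sets. This reduces the problem to analysing a structurally rigid family.

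Next I would seek a stability dichotomy for shifted $s$-EM-families: either every member of $Y$ meets a fixed set of size $s$ (placing $Y$ inside a translate of $Y_1$), or every member of $Y$ lies inside a fixed ground set of size $k(s+1)-1$ (placing $Y$ inside a translate of $Y_2$). A natural route is to introduce the largest index $t$ appearing in some $A \in Y$. If $t \leq k(s+1)-1$ then $Y \subseteq Y_2$ after relabelling. Otherwise pick $A \in Y$ containing $t$; by shiftedness, many $k$-sets on indices $\leq k(s+1)-1$ that are "below" $A$ must also lie in $Y$, and from these one tries to greedily pick $s+1$ pairwise disjoint members unless $Y$ is severely restricted, hopefully forcing a small transversal of size $s$.

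The main obstacle, and the reason this conjecture has resisted a full proof since 1965, is the intermediate regime $n \approx (s+1)k$, where $|Y_1|$ and $|Y_2|$ have comparable order of magnitude and neither side of the dichotomy dominates. Here even sharp shifting seems insufficient: one wants a sensitive count of members of $Y$ avoiding a fixed $s$-set, in the spirit of the Frankl--Deza kernel method or the Frankl--Kupavskii junta approximations, combined with a careful link analysis. I expect that adapting such a refined count into a clean argument covering the narrow window where $n$ is within an additive constant of $(s+1)k$ is the step where a genuinely new idea is required; for $n$ sufficiently larger than $(s+1)k$, shifting and stability should suffice and would recover results in the spirit of Frankl, while for small $n$ close to $sk$ the bound $|Y_2|$ dominates and the argument should collapse to a direct counting exercise.
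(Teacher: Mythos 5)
This statement is a \emph{conjecture}, not a theorem: the paper offers no proof of it, and none exists in the literature. Conjecture \ref{conj:em1} is the Erd\H{o}s Matching Conjecture itself, open since 1965; the paper only records partial results (Erd\H{o}s--Gallai for $k=2$, Frankl for $k=3$ and for $n \geq (2s+1)k-s$, Frankl--Kupavskii for $n \geq \frac53 sk - \frac23 s$ and large $s$) and then works on the $q$-analog. So there is no ``paper's own proof'' to compare against, and any complete argument you produced would be a major new result rather than a reconstruction.

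Your proposal, to its credit, does not claim to be a proof. Shifting does preserve the $s$-EM property and reduces to left-compressed families, and the dichotomy you describe (cover of size $s$ versus ground set of size $k(s+1)-1$) is exactly the expected extremal structure; this is the standard frame in which Frankl's partial results are proved. But the step where you ``greedily pick $s+1$ pairwise disjoint members unless $Y$ is severely restricted'' is precisely where every known attempt stalls: shiftedness alone does not force the dichotomy in the regime where $n$ is within a constant factor of $(s+1)k$ and $|Y_1|$, $|Y_2|$ are comparable. You identify this yourself and say a genuinely new idea is required there. That is an accurate self-assessment, and it means the proposal is an outline of the known partial approaches, not a proof of the statement. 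For the purposes of this paper nothing is lost, since the author only uses the conjecture as motivation and context for the vector-space analog; but you should not present this sketch as establishing Conjecture \ref{conj:em1}.
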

The conjecture was proven for $k=2$ by Erd\H{o}s and Gallai \cite{Erdos1959} 
and for $k=3$ by Frankl \cite{Frankl2017b}.
In particular, Frankl showed the conjecture for $n \geq (2s+1)k-s$ \cite{Frankl2013a}
and for $n \leq (s+1)(k+\epsilon)$ where $\epsilon$ depends on $k$ \cite{Frankl2017a}.
Furthermore, Frankl and Kupavskii cover $n \geq \frac53 sk - \frac23 s$ for sufficiently large $s$ \cite{Frankl2018}.
A more complete overview on the history of the problem can be found in \cite{Frankl2018}.

For our purposes, let us state the Erd\H{o}s Matching Conjecture in a way that makes
it more generic, easily transferable between lattices, and includes a structural classification.
\begin{conjecture}[The Erd\H{o}s Matching Conjecture (variant)]\label{conj:em2}
  Let $Y$ be a largest $s$-EM-family of $k$-sets of $\{ 1, \ldots, n \}$.
  Then $Y$ is the union of $s$ intersecting families or its complement.
\end{conjecture}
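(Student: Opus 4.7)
The plan is to establish Conjecture~\ref{conj:em2} via a Metsch-type local stability argument, parallel to the approach used for the vector-space version later in this paper. Both $Y_1$ (a union of $s$ intersecting families) and $Y_2$ (whose complement inside its small ground set is a union of intersecting families) qualify, so it suffices to show that any extremal $s$-EM-family $Y$ must structurally fall into one of these two classes.

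First I would separate the two regimes by the size of the \emph{support} $U(Y) := \bigcup_{B \in Y} B$. If $|U(Y)| \leq k(s+1)-1$, then $Y$ lies inside the family of $k$-subsets of $U(Y)$, and one argues that the complement $\binom{U(Y)}{k} \setminus Y$ must itself be a union of intersecting families — this is the ``small ground set'' case matching the $Y_2$-type extremum. Otherwise $|U(Y)|$ is strictly larger and we proceed via high-degree elements.

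In the large-support regime I would choose a set $T$ of $s$ high-degree elements (where degree is defined in the hypergraph $Y$) and split $Y = Y_T \sqcup Y_T^c$, with $Y_T$ the subfamily meeting $T$. Note that $Y_T$ is automatically a union of $s$ intersecting families (one for each $t \in T$), so the task reduces to showing $Y_T^c = \emptyset$. If $Y_T^c$ were non-empty, a greedy matching-extension argument would attempt to produce, for each $t \in T$, a set $B_t \in Y$ through $t$ that is disjoint from a chosen $B_0 \in Y_T^c$ and from all previously chosen $B_{t'}$'s; success yields $s+1$ pairwise disjoint sets in $Y$, contradicting the EM-condition. The high-degree choice of $T$ is what makes each individual extension step feasible.

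The main obstacle is carrying out the greedy extension near the transition $n \approx (s+1)k$, where $|Y_1| \approx |Y_2|$ and the degrees of the top $s$ elements are no longer sharply separated from the bulk, so disjointness constraints between successive $B_t$'s become intricately correlated. This is precisely why Conjecture~\ref{conj:em1} remains open in general: the shifting, junta-approximation, and random-sampling techniques that settle the extreme regimes do not yet glue cleanly across the middle, and the Metsch-style approach pursued in this paper likewise requires $n$ to dominate $sk$ by a polynomial factor (cf.\ the $q$-analog hypothesis $16s \le q^{(n-k)/4}$ in the abstract).
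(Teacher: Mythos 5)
The statement you are addressing is Conjecture~\ref{conj:em2}, which the paper presents as an \emph{open conjecture} (a restatement of the Erd\H{o}s Matching Conjecture); the paper contains no proof of it, and your proposal does not supply one either. What you have written is a programme rather than an argument, and you concede the decisive point yourself in your final paragraph: the greedy matching-extension step in the large-support regime only goes through when $n$ dominates $sk$, which is exactly the range already covered by Frankl's theorem ($n \geq (2s+1)k-s$) and by the Metsch-type method that this paper adapts to vector spaces under the hypothesis $16s \leq q^{(n-k)/4}$. Near the transition $n \approx (s+1)k$, where $|Y_1| \approx |Y_2|$, no known technique closes the gap, and your sketch offers no new idea there; so at best it reproves known partial results.

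There are also concrete gaps in the two cases you do set up. In the small-support case you assert without argument that $\binom{U(Y)}{k} \setminus Y$ ``must itself be a union of intersecting families''; moreover this is not quite what the conjecture asserts (the complement is naturally taken inside all $k$-sets of $\{1,\ldots,n\}$, and the complement of $Y_2$ is a union of $n-k(s+1)+1$ stars, not $s$ of them, so even the target statement in that regime needs care). In the large-support case, you do not justify that the $s$ high-degree elements capture all of $Y$ rather than merely most of it, and the greedy step needs a quantitative lower bound on the number of members of $Y$ through each $t \in T$ that avoid the at most $(s+1)k$ points already used --- precisely the estimate that fails to be available in the extremal range. In short: the dichotomy and the Metsch-style stability strategy are the right framework for the partial results that are known (and mirror what the paper actually proves in the vector-space setting, Theorem~\ref{thm:main}), but this is not a proof of the conjecture, which remains open.
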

Note that one can deduce Conjecture \ref{conj:em1} from Conjecture \ref{conj:em2}
due to the fact that the structure of large intersecting families of $k$-sets is well-known.
In this paper we consider $s$-EM-families of $k$-spaces in $\FF_q^n$.
We say that two subspaces are \textit{disjoint} if their intersection is the trivial subspace.
The natural conjecture here is as follows.
\begin{conjecture}\label{conj:em_vs}
  Let $Y$ be a largest $s$-EM-family of $k$-spaces of $\FF_q^n$. 
  Then $Y$ is the union of $s$ intersecting families or its complement.
\end{conjecture}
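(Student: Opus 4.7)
The plan is to follow Metsch's counting method, which has been successful in related problems on Cameron–Liebler classes, and to target the regime described in the abstract, namely $s$ much smaller than $q^{(n-k)/4}$ and $q^{(n-2k+1)/3}$. In this range the construction $Y_1$ (a union of $s$ stars, each of size $\gauss{n-1}{k-1}_q$) dominates $Y_2$, so the goal is to show that any $s$-EM-family $Y$ of near-extremal size is contained in the union of $s$ stars at $s$ distinguished points, with the complement case kicking in only outside the stated threshold.

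First, I would introduce the point-degree function $d(P) = |\{U \in Y : P \leq U\}|$. Double counting gives $\sum_P d(P) = |Y| \cdot \gauss{k}{1}_q$, so if $|Y|$ is close to $s \cdot \gauss{n-1}{k-1}_q$ then averaging produces many points of degree close to $\gauss{n-1}{k-1}_q$. I would then fix a threshold, tuned to the hypothesis on $s$, and define the set $S$ of ``heavy'' points together with the subfamily $Y^\circ \subseteq Y$ of $k$-spaces meeting $S$. A second-moment estimate, using $\sum_P d(P)^2$ and the $q$-binomial counts of pairs $(U, U')$ sharing a subspace of each possible dimension, should confine $|S|$ to be small.

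The core Metsch-style step is a greedy matching argument to bound $|S|$ and $|Y \setminus Y^\circ|$. Suppose $|S| \geq s+1$; pick distinct heavy points $P_0, \ldots, P_s$ and build pairwise disjoint $U_i \in Y$ with $P_i \leq U_i$ one at a time. At stage $i$, the number of $k$-spaces through $P_i$ that meet $U_0 \cup \cdots \cup U_{i-1}$ is controlled by $i \cdot \gauss{k}{1}_q \cdot \gauss{n-2}{k-2}_q$, which, under the assumption $16 s \leq q^{(n-2k+1)/3}$, is dwarfed by $d(P_i)$; a disjoint $U_i$ therefore always exists, contradicting the $s$-EM property. The companion bound on $Y \setminus Y^\circ$ uses the count $q^{k(n-2k)} \gauss{n-k}{k}_q$ of $k$-spaces skew to a fixed one, where the exponent $(n-k)/4$ in the hypothesis is what allows the iteration to close.

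The main obstacle, as always in Metsch-type arguments, is the simultaneous juggling of several $q$-binomial quantities of comparable order: degree, codegree, skew-counts, and intersections of intermediate dimension. In the set case the corresponding estimates are essentially trivial and collapse to simple binomial ratios, but in the Grassmann geometry the intermediate-dimensional intersections each contribute a separate term, and picking the heavy-point threshold so that every bound is beaten by a multiplicative constant requires exactly the shape $16 s \leq \min\{ q^{(n-k)/4}, q^{(n-2k+1)/3}\}$ that appears in the abstract. Once these estimates are in place, $|S| \leq s$ and $Y = Y^\circ$ together with the $s$-EM hypothesis force $S$ to consist of exactly $s$ points spanning a union of $s$ intersecting families, yielding the conjecture in this range; the complement case (mirroring $Y_2$) only arises outside the threshold and would be handled by a separate stability statement for the matching number.
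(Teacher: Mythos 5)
There is a fundamental mismatch here: the statement you are asked about is Conjecture \ref{conj:em_vs}, which the paper does \emph{not} prove and in fact explicitly doubts. The paper only establishes the conjecture in the restricted range $16 s \leq \min\{ q^{\frac{n-k-\ell+2}{3}}, q^{\frac{n-k-r}{3}}, q^{\frac{n}{2}-k+1}\}$ (Theorem \ref{thm:main}), states in the introduction that the existence of non-trivial Cameron--Liebler line classes ``makes one doubt that Conjecture \ref{conj:em_vs} is true,'' and devotes Section \ref{sec:almost_cnt_ex} to near-counterexamples: for $(n,k)=(4,2)$ and $q$ even, the secants to an elliptic quadric form an $s$-EM family with $s = \frac{q^2}{2}$ whose size is within $\frac{q^2}{2}+q+2$ of the best union of intersecting families, yet is structurally completely different. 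Your proposal only ever targets the small-$s$ regime, explicitly defers the complement case to ``a separate stability statement,'' and says nothing about $s$ beyond the threshold --- so even granting every estimate you sketch, you would have proved (a version of) Theorem \ref{thm:main}, not the conjecture. Presenting this as a proof of the conjecture is the central gap.

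Within the small-$s$ regime your route also diverges from, and is weaker than, the paper's. You propose a point-degree/heavy-point argument with a second-moment bound and a greedy matching on heavy points. The paper instead averages $|Y \cap Z|$ over maximum partial spreads $Z$ (using Beutelspacher's $z$-spreads, which is where the parameter $r$ with $n = mk+r$ enters), extracts a spread meeting $Y$ in $s$ elements $S_1,\dots,S_s$ each with large conditional average $w_{S_i}$, shows via Lemma \ref{lem:Y_neighs} that the family $\scrE_i$ of members of $Y$ meeting $S_i$ and no other $S_j$ has size at least $\frac{39}{64}\gauss{n-1}{k-1}$, and then invokes the Hilton--Milner-type stability theorem (Theorem \ref{thm:gen_stab}) to place each $\scrE_i$ inside a dictator \emph{or a dual dictator}; a final exchange argument produces $s+1$ pairwise disjoint members from any leftover element. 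Two concrete problems with your sketch: (1) your conclusion that $|S| \leq s$ and $Y = Y^\circ$ ``force'' $Y$ to be a union of $s$ stars ignores that for $n = 2k$ the maximum intersecting families include dual dictators, which have no distinguished point and would be invisible to a heavy-point analysis; (2) the step from ``$Y$ is close to a union of $s$ intersecting families'' to ``$Y$ \emph{is} such a union'' requires the stability theorem for intersecting families plus the disjointness exchange argument, neither of which appears in your outline.
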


We consider the setting in vector spaces as particularly interesting:
In the set case, we have that if $k$ divides $n$ and $Z$ is a family of $k$-sets which partitions $\{ 1, \ldots, n \}$,
then $Z$ intersects an $s$-EM-family $Y$ in at most $s$ elements. It is not hard to see that this implies
\begin{align*}
  |Y| \leq s\binom{n-1}{k-1}.
\end{align*}
One can show that equality in this bound only holds when $Y$ is, 
in the language of \cite{Godsil1993}, a certain type of equitable bipartition of the Johnson graph
or, in the language of \cite{Filmus2018}, a Boolean degree $1$ function of the Johnson graph.
These do not exist except for $s=0, 1, \frac{n}{k}-1, \frac{n}{k}$, so the bound above can 
be instantaneously improved by one.

Write $\gauss{n}{k}_q$ for the Gaussian (or $q$-binomial) coefficient. For $n$ and $k$ integers and $q$ a 
prime power, $\gauss{n}{k}_q$ is the 
number of $k$-spaces in $\FF_q^n$.
In the vector space analog, if $k$ divides $n$ and $Z$ is a family of $k$-spaces which 
partitions $\FF_q^n \setminus \{ 0 \}$, so a \textit{spread} of $\FF_q^n$, then the same behavior occurs. In this setting,
Boolean degree 1 functions are known as Cameron-Liebler classes of $k$-spaces \cite{Blokhuis2018,Filmus2018}.
Here we have the analogous result, that is a $s$-EM-family $Y$ of $k$-spaces intersects $Z$ in 
at most $s$ elements, from which it follows that
\begin{align*}
  |Y| \leq s \gauss{n-1}{k-1}_q.
\end{align*}
It is easy to find trivial examples for Cameron-Liebler classes which meet this bound for small $s$, but the general picture is not clear.
Throughout the paper, we use projective notation and call $1$-spaces \textit{points}, $2$-spaces \textit{lines}, 
$3$-spaces \textit{planes}, and $(n-1)$-spaces \textit{hyperplanes}.
The trivial examples, up to taking complements and besides the empty set, are all $k$-spaces through a fixed point, all $k$-spaces in a fixed hyperplane,
and the disjoint union of the first two examples.
Non-trivial Cameron-Liebler classes appear to exist for $(n, k) = (4, 2)$ and any $q > 2$ \cite{Bruen1999,DeBeule2016,Drudge1998,Feng2015,
Gavrilyuk2018,Rodgers2012}, but 
not for $n \geq 2k$ when $n > 4$. The latter is at least true for $q \in \{ 2, 3, 4, 5 \}$ \cite{Filmus2018}.
The fact that non-trivial examples exist for $(n, k) = (4, 2)$ does not imply that the Erd\H{o}s Matching
Conjecture is false as these examples might have $s+1$ pairwise disjoint elements which do not extend
to a spread of $\FF_q^n$. Indeed, all known non-trivial examples investigated by the author are not $s$-EM families.
Nonetheless, it makes one doubt that Conjecture \ref{conj:em_vs} is true.

It is known that there are no non-trivial small examples for Cameron-Liebler classes.
Metsch established a proof technique in \cite{Metsch2017} which essentially shows that small
Cameron-Liebler classes are $s$-EM-families. He used it to show the following.
\begin{theorem}[Metsch {\cite[Theorem 1.4]{Metsch2017}}]\label{thm:metsch_n_eq_2k}
 All Cameron-Liebler classes $Y$ of $k$-spaces in $\FF_q^{2k}$ with $5 \cdot |Y| \leq q \gauss{n-1}{k-1}$ are trivial.
\end{theorem}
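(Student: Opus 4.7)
The plan is to use Metsch's point-counting technique referenced in the paragraph preceding the theorem. Set $s = |Y|/\gauss{2k-1}{k-1}_q$, so the hypothesis reads $s \leq q/5$. For each point $P$ of $\FF_q^{2k}$ set $y_P = |\{K \in Y : P \leq K\}|$; the defining property of a Cameron-Liebler class (here $|Y \cap S| = s$ for every spread $S$) together with standard double-counting gives the identity $\sum_P y_P = \gauss{k}{1}_q \cdot |Y|$. I will also use the dual quantities $z_H = |\{K \in Y : K \leq H\}|$ for hyperplanes $H$, which satisfy an analogous identity and, because $n = 2k$, are related to the $y_P$ by point-hyperplane duality.

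The heart of the argument is a dichotomy on the point degrees: under the hypothesis $s \leq q/5$, every point $P$ satisfies either $y_P$ is small or $y_P = \gauss{2k-1}{k-1}_q$, where the ``small'' threshold is of the order of a lower $q$-binomial and the latter equality means that \emph{every} $k$-space through $P$ lies in $Y$. To prove this, fix $P$ with $y_P$ in the forbidden middle range, choose a $k$-space $K \in Y$ through $P$, and count triples $(K', L, K'')$ with $K', K'' \in Y \setminus \{K\}$ disjoint from $K$ and $L$ a specified transversal line. Using the fact that every pair of disjoint $k$-spaces of $\FF_q^{2k}$ extends to a spread (meeting $Y$ in exactly $s$ elements), this count admits two expressions whose comparison yields an inequality of the form
\begin{align*}
y_P \cdot \bigl(\gauss{2k-1}{k-1}_q - y_P\bigr) \leq s \cdot E(q,k),
\end{align*}
for an explicit error term $E(q,k)$. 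The hypothesis $s \leq q/5$ is precisely what forces the right-hand side to be smaller than the minimum positive value of the left-hand side, ruling out the middle range.

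Let $T$ be the set of ``full'' points. From $\sum_P y_P = \gauss{k}{1}_q |Y|$ and the dichotomy one obtains $|T| \leq s$, and for any two full points $P,Q$ every $k$-space through the line $\langle P, Q \rangle$ lies in $Y$; hence $T$ spans a projective subspace whose $k$-spaces all lie in $Y$. A dual run of the same argument, now applied to the hyperplane degrees $z_H$, identifies a complementary hyperplane component. Combining these two conclusions with the identity $|Y| = s \gauss{2k-1}{k-1}_q$, the set $Y$ must coincide with one of the trivial examples listed just above the theorem: the empty set, all $k$-spaces through a fixed point, all $k$-spaces in a fixed hyperplane, or the disjoint union of the last two.

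The main obstacle is the dichotomy step. The inequality controlling $y_P$ aggregates contributions from $k$-spaces of $Y$ meeting $K$ in subspaces of every intermediate dimension, each contributing an error of a different order in $q$. Bounding the sum of these errors by $s \cdot E(q,k)$ with $E(q,k)$ small enough relative to $(\gauss{2k-1}{k-1}_q)^2/4$ is the technical core of Metsch's method, and the constant $5$ in the hypothesis is exactly what is needed to keep the bookkeeping valid; a sharper analysis of the error terms should lower it.
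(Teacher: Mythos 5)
First, a point of reference: the paper does not prove this statement at all. It is quoted verbatim from Metsch \cite[Theorem 1.4]{Metsch2017} (with the remark that the ``$q$ sufficiently large'' hypothesis of the original can be dropped by \cite{Ihringer2018}), so there is no in-paper proof to compare against. Judged on its own, your proposal is a plausible outline of a Drudge/Metsch-style degree argument, but its core steps are asserted rather than established. The central inequality $y_P\bigl(\gauss{2k-1}{k-1}-y_P\bigr)\le s\cdot E(q,k)$ is never derived: the triple count $(K',L,K'')$ is not specified (what is the ``specified transversal line''? over what is the sum taken?), $E(q,k)$ is left undefined, and the claim that $s\le q/5$ is ``precisely what forces'' the conclusion is circular without the explicit bookkeeping you yourself identify as the technical core. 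More seriously, the dichotomy as you state it --- $y_P$ small or $y_P=\gauss{2k-1}{k-1}$ exactly --- is inconsistent with the trivial examples it must accommodate: for the dual dictator (all $k$-spaces in a hyperplane $H$), every point of $H$ has $y_P=\gauss{2k-2}{k-1}\sim q^{(k-1)^2}$, which is neither ``of the order of a lower $q$-binomial'' in the sense of $\gauss{2k-2}{k-2}\sim q^{(k-2)k}$ nor equal to $\gauss{2k-1}{k-1}\sim q^{(k-1)k}$. So either your ``small'' threshold must sit at the level $\gauss{2k-2}{k-1}$ (in which case the set $T$ of full points can be empty and the subsequent structural deduction needs to be rebuilt around the dual degrees), or the dichotomy is simply false. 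Finally, the last step --- passing from ``$|T|\le s$ plus a dual hyperplane component plus $|Y|=s\gauss{2k-1}{k-1}$'' to ``$Y$ is one of the four trivial examples'' --- is a substantial leap: you must show $s\le 2$ and that nothing of $Y$ remains outside the identified dictator/dual-dictator pieces, and neither is addressed.

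It is also worth noting that Metsch's actual route, as described in the paragraph of the paper introducing this theorem, is different from yours: he first shows that a small Cameron--Liebler class contains no $s+1$ pairwise disjoint members (i.e., is an $s$-EM family) and then derives triviality from spread-intersection counting --- essentially the strategy carried out in Section 3 of this paper (Lemmas \ref{lem:spread_avg}--\ref{lem:Y_neighs} and Theorem \ref{thm:gen_stab}), specialized to $n=2k$. If you want a complete proof, adapting that spread-based argument is likely the shorter path; the point-degree approach you sketch can be made to work for lines ($k=2$) but the ``contributions from $k$-spaces meeting $K$ in subspaces of every intermediate dimension'' that you defer are exactly where the argument becomes delicate for general $k$.
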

Note that \cite{Metsch2017} states that $q$ has to be sufficiently large, but this condition can be dropped \cite{Ihringer2018}. Blokhuis, De Boeck and D'haeseleer generalized this to $k$-spaces in $\FF_q^n$ \cite[Theorem 4.9]{Blokhuis2018},
but the proof of their result (and therefore the stated result) contains a minor mistake which we amend with Theorem \ref{thm:main_cl}.

We investigate $s$-EM families of $k$-spaces in $\FF_q^n$.
Let $\ell$ be the integer satisfying $\frac{q^{\ell-1}-1}{q-1} < s \leq \frac{q^{\ell}-1}{q-1}$.
Write $n=mk+r$ with $0 \leq r < k$.
Our main result is as follows. 
\begin{theorem}\label{thm:main}
  Let $n \geq 2k$ and let $Y$ be a largest $s$-EM family of $k$-spaces in $\FF_q^n$.
  If $16 s \leq$ $\min\{$ $q^{\frac{n-k-\ell+2}{3}},$ $q^{\frac{n-k-r}{3}},$ $q^{\frac{n}2-k+1} \}$,
  then $Y$ is the union of $s$ intersecting families.
\end{theorem}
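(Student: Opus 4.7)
The plan is to adapt Metsch's clique-partition technique (cf.\ Theorem~\ref{thm:metsch_n_eq_2k}) from Cameron-Liebler classes to $s$-EM families, ultimately producing a small set of points whose stars cover $Y$. The starting point is the spread-type bound $|Y|\leq s\gauss{n-1}{k-1}_q$: when $k\mid n$ this comes from partitioning $\FF_q^n\setminus\{0\}$ by a spread, and in general from partial-spread covers whose sizes are parametrized by $r=n\bmod k$ and by the integer $\ell$ with $(q^{\ell-1}-1)/(q-1)<s\leq(q^{\ell}-1)/(q-1)$, which is why these two parameters appear in the hypothesis. For each point $P$ set $\sigma(P)=|\{K\in Y:P\subset K\}|$, and call $P$ \emph{heavy} if $\sigma(P)\geq\tau_2$ and \emph{light} if $\sigma(P)\leq\tau_1$, for thresholds $\tau_1\ll\tau_2$ to be determined.

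\smallskip
\textbf{Key step: bimodality of $\sigma(P)$.} For a fixed $P$, I would double count pairs $(K,K')$ of elements of $Y$ through $P$, weighted by the number of partial spreads through $P$ that separate them; three natural weightings arise from the three scales of partial spreads inside $\FF_q^n$ tied respectively to $r$, $\ell$, and the hyperplane regime. Combined with the spread-type bound, this rules out intermediate values and forces the dichotomy $\sigma(P)\in[0,\tau_1]\cup[\tau_2,\gauss{n-1}{k-1}_q]$ with $\tau_2$ within a lower-order term of $\gauss{n-1}{k-1}_q$. The hypotheses $16s\leq q^{(n-k-\ell+2)/3}$, $16s\leq q^{(n-k-r)/3}$, and $16s\leq q^{n/2-k+1}$ are precisely what is needed to make $\tau_2-\tau_1>0$ in each of the three regimes; the cube-root exponent is characteristic of Metsch's method, which effectively counts triples of $k$-spaces.

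\smallskip
\textbf{Covering and counting heavy points.} Suppose some $K_0\in Y$ contains only light points. Then at most $\gauss{k}{1}_q\cdot\tau_1$ elements of $Y$ meet $K_0$, a small fraction of $|Y|$. Picking a $K_1$ disjoint from $K_0$ and iterating on the residual family (which still violates the analogous bound with $s$ replaced by $s-i$ at stage $i$), one extracts $s+1$ pairwise disjoint members of $Y$, contradicting the $s$-EM property; hence every $K\in Y$ contains a heavy point. A parallel greedy argument rules out $s+1$ distinct heavy points $P_1,\dots,P_{s+1}$: the number of members of $Y$ through $P_i$ meeting some already-chosen $K_j$ with $j<i$ is at most $s\gauss{k}{1}_q\gauss{n-2}{k-2}_q$, and the hypothesis $16s\leq q^{n/2-k+1}$ ensures that this is strictly below $\tau_2$, so pairwise-disjoint representatives $K_i\ni P_i$ can always be chosen.

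\smallskip
Consequently, the heavy points $P_1,\dots,P_t$ satisfy $t\leq s$ and $Y=\bigcup_{i=1}^t\{K\in Y:P_i\subset K\}$ is a union of at most $s$ intersecting families, as required. The principal obstacle is the bimodality step: one must calibrate the weight in the Metsch double count so that the dichotomy threshold is simultaneously sharp enough to survive all three hypotheses on $s$ while $\tau_1$ stays small enough for the extraction arguments above. The three terms in the $\min$ reflect the three scales at which partial spreads can be assembled in $\FF_q^n$, and balancing them against one another is the principal technical work; everything downstream is a fairly clean consequence of the $s$-EM assumption.
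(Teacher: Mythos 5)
Your outline leaves its central claim unproved, and that claim is false in part of the theorem's range. The ``bimodality'' of the point degree $\sigma(P)$ is asserted to follow from a double count of pairs $(K,K')$ weighted by separating partial spreads, but no mechanism is offered that excludes intermediate degrees, and none can work uniformly in $n\geq 2k$: for $n=2k$ a dual dictator (all $k$-spaces in a fixed hyperplane $H$) is a maximum intersecting family, every point of $H$ has degree $\gauss{n-2}{k-1}\approx q^{-(k-1)}\gauss{n-1}{k-1}$ inside it --- neither light enough for your extraction step nor anywhere near $\gauss{n-1}{k-1}$ --- and, worse, the union of the dual dictator of $H$ with $s-1$ dictators through points off $H$ is a valid $s$-EM family that is strictly \emph{larger} than any union of $s$ dictators (the dual dictator is disjoint from those point-stars, so fewer overlaps are subtracted). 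Hence the largest family at $n=2k$ is not a union of point-stars, and your intended endpoint $Y=\bigcup_i\{K:P_i\subset K\}$ cannot be reached. Even for $n>2k$, the greedy extraction from a $K_0$ with only light points does not close: after deleting the few neighbours of $K_0$ you have no control over the point degrees of the next space chosen from the residual family, so you cannot iterate to $s+1$ pairwise disjoint members without first proving that \emph{every} element of $Y$ has many neighbours in $Y$ --- a statement you never establish.

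The paper supplies exactly the two ingredients your sketch is missing, and neither is a point-degree argument. First, it averages over maximum partial spreads ($z$-spreads): Lemma \ref{lem:spread_avg} and Corollary \ref{cor:good_spread_ex} produce a $z$-spread meeting $Y$ in $s$ anchor spaces $S_1,\dots,S_s$ whose conditional intersection numbers $w_{S_i}$ are close to $s$, and the ratios $n_1/n_2$, $n_2/n_3$ of spread counts through one, two, three fixed pairwise disjoint $k$-spaces convert this into Lemma \ref{lem:Y_neighs}: every element of $Y$ meets almost $\gauss{n-1}{k-1}$ elements of $Y$, while two good anchors have few common neighbours; this second-moment step is where the three exponents in the hypothesis on $s$ are actually consumed. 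Second, the sets $\scrE_i$ of elements of $Y$ meeting $S_i$ and missing all other $S_j$ are intersecting families of size exceeding $3[k]\gauss{n-2}{k-2}$, so the stability theorem for intersecting families (Theorem \ref{thm:gen_stab}) places each $\scrE_i$ inside a dictator \emph{or dual dictator} $\scrE_i'$, after which a disjointness argument forces $Y\subseteq\bigcup_i\scrE_i'$. Without an appeal to some such Hilton--Milner-type stability result, your plan has no route from degree statistics to the structural conclusion, and in particular no way to recover the dual dictators that genuinely occur at $n=2k$.
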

Note that we did not optimize the factor $16$ on the left hand side of the inequality. In fact, 
$16$ can be certainly replaced by a factor $c_q$ with $\lim_{q \rightarrow \infty} c_q = 1$.
Besides this, the argument is optimized to the best knowledge of the author. 
If we bound $2q^{\ell-1}$ instead of $s$, which we can as $s \leq \frac{q^\ell-1}{q-1} \leq \frac{q}{q-1} q^{\ell-1} \leq 2q^{\ell-1}$, then we see that $\ell \leq \lceil \frac{n-k+5}{4} \rceil \leq \frac{n-k+8}{4}$ suffices. Hence, $16s \leq \min\{  q^{\frac{n-k}{4}}, q^{\frac{n-2k+1}{3}}, q^{\frac{n}{2}-k+1} \}$. Here the last bound is redundant, thus we arrive at the simplified claim of the abstract. For $n \geq 3k-4$, this simplifies further to $16s \leq q^{\frac{n-k}{4}}$.

Cameron-Liebler classes are completely classified for 
$q \in \{ 2, 3, 4, 5 \}$ \cite{Drudge1998,Filmus2018,Gavrilyuk2018a,Gavrilyuk2014}, while in general only 
some limited characterizations are known.
For the special case of $(n,k) = (4,2)$ Gavrilyuk and Metsch \cite{Gavrilyuk2014}, and Metsch \cite{Metsch2014} showed
highly non-trivial existence conditions. The latter is as follows.
\begin{theorem}[{Metsch \cite{Metsch2014}}]
  Let $Y$ be a Cameron-Liebler class of lines in $\FF_q^4$ of size $s (q^2+q+1)$. 
  If $s \leq C q^{4/3} (q^2+q+1)$ for some universal constant $C$,
  then $s \leq 2$ and $Y$ is trivial.
\end{theorem}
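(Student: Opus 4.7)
The plan is to follow Metsch's original strategy, which refines the standard ``counting at points and planes'' approach for Cameron--Liebler line classes in $\PG(3,q)$ and pushes it to its natural $q^{4/3}$ barrier. Throughout, let $Y$ be a non-trivial Cameron--Liebler class of lines in $\FF_q^4$ with parameter $s$ (so $|Y| = s(q^2+q+1)$) and let $\chi \colon \scrL \to \{0,1\}$ denote its characteristic function on the set $\scrL$ of lines. For a point $P$, write $a_P$ for the number of lines of $Y$ through $P$; for a plane $\pi$, write $b_\pi$ for the number of lines of $Y$ contained in $\pi$.

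First, I would recall the Cameron--Liebler counting identities: for any line $\ell$ the number of lines of $Y$ meeting $\ell$ in a point equals $(s-1)(q+1) + q(q+1)\chi(\ell)$, and for any incident point--plane pair $(P,\pi)$ one has $a_P + b_\pi \in \{s, s+q+1\}$ depending on $\chi(\ell)$ for the line $\ell = P\pi^\perp$ in a suitable sense. From these, define the excesses $u_P := a_P - s$ and $v_\pi := b_\pi - s$. Standard double counting of flags (line, incident point) and (line, containing plane), combined with the identities above, yields
\[
\sum_P u_P^2 + \sum_\pi v_\pi^2 \;\leq\; C_1 \, s\, q^3
\]
for an absolute constant $C_1$, so on average the deviations are small.

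Second, I would implement Metsch's key amplification step: classify points as \emph{rich} if $u_P \geq q^{2/3}$ and planes analogously. The dual incidence structure implies that a rich point forces many rich planes through it and vice versa. Counting incidences between rich points and rich planes in two different ways, then applying Cauchy--Schwarz in the style of \cite{Metsch2017}, gives a lower bound of the form $R_{\mathrm{pt}} R_{\mathrm{pl}} \geq c_1 s^2 q^{2/3}$ where $R_{\mathrm{pt}}, R_{\mathrm{pl}}$ are the numbers of rich points and planes; the $L^2$-bound above simultaneously forces $R_{\mathrm{pt}}, R_{\mathrm{pl}} \leq c_2 s q^{5/3}$. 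Comparing these two inequalities yields $s \geq c_3 q^{4/3}$ whenever any rich point exists.

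Third, to conclude that $Y$ is trivial once $s \leq C q^{4/3}$ with $C$ small, I would show that the absence of rich points and planes forces $a_P \in \{0, 1, q+1, q^2+q+1\}$ and $b_\pi$ in the analogous set for every $P, \pi$; this is essentially a discrete rigidity statement saying that the excesses, already known to be small on average, must be \emph{integer-rigid} because of the local flag identities $a_P + b_\pi \in \{s, s+q+1\}$. A final short casework argument classifies classes obeying this rigidity: they are unions of the trivial classes listed in the introduction, and a direct parameter count shows $s \leq 2$.

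The main obstacle is the second step: correctly extracting the $q^{4/3}$ exponent from the balance between the $L^2$-bound and the rich--rich incidence lower bound. Any slack in either inequality costs a power of $q$, so one must use the Cameron--Liebler identities (not just the line-intersection identity) to tie $u_P$ and $v_\pi$ together at every incident flag. This is exactly the delicate pigeonhole/Cauchy--Schwarz combination executed in \cite{Metsch2014}, and reproducing it without loss is where essentially all the work lies.
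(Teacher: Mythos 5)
This statement is not proved in the paper at all: it is quoted as background from Metsch \cite{Metsch2014}, so there is no in-house argument to compare yours against. (Note also that the hypothesis as printed, $s \leq C q^{4/3}(q^2+q+1)$, is surely a typo for $|Y| \leq C q^{4/3}(q^2+q+1)$, i.e.\ $s \leq Cq^{4/3}$; as literally stated the condition is vacuous in the interesting range.) Judged on its own terms, your proposal is an outline rather than a proof, and the gap sits exactly where you locate it yourself. The three load-bearing inequalities are all asserted without derivation: (a) the second-moment bound $\sum_P u_P^2 + \sum_\pi v_\pi^2 \leq C_1 s q^3$ does not follow from a ``standard double counting of flags'' --- the flag identity $a_P + b_\pi = s + (q+1)\,|\mathrm{pencil}(P,\pi)\cap Y|$ controls sums of $a_P+b_\pi$ over incident pairs, not second moments of the individual deviations, and bounding those deviations is precisely the substance of Metsch's work, not its starting point; (b) the rich--rich incidence lower bound $R_{\mathrm{pt}}R_{\mathrm{pl}} \geq c_1 s^2 q^{2/3}$ is where the exponent $4/3$ is manufactured, and you give no mechanism for it beyond ``Cauchy--Schwarz in the style of \cite{Metsch2017}''; (c) the ``integer rigidity'' $a_P \in \{0,1,q+1,q^2+q+1\}$ in the absence of rich points is not a consequence of anything you have established --- small average deviation does not force the degrees into this four-element set.

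Since you concede that reproducing step (b) ``without loss is where essentially all the work lies'' and that work is not carried out, the proposal does not establish the theorem; it is a plausible research plan at best. If you want to engage with the actual argument you must go to \cite{Metsch2014} itself, whose route to the $q^{4/3}$ threshold rests on a careful analysis of how the lines of $Y$ sit inside individual planes (degree distributions of the induced dual line sets) rather than on a global rich-points dichotomy; in the present paper the only proofs given are of Theorems \ref{thm:main}, \ref{thm:main_der} and \ref{thm:main_cl}, which use a different (Metsch \cite{Metsch2017}-style spread-averaging) technique and do not recover the $(n,k)=(4,2)$ statement above.
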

From Theorem \ref{thm:main} we deduce the following.
\begin{theorem}\label{thm:main_cl}
  Let $n \geq 2k$ and let $Y$ be a Cameron-Liebler class of $k$-spaces in $\FF_q^n$ of size $s \gauss{n-1}{k-1}$.
  If $16 s \leq \min\{ q^{\frac{n-k-\ell+2}{3}}, q^{\frac{n-2k-\tilde{r}+1}{3}} \}$, where $n = \tilde{m} k - \tilde{r}$ with $0 \leq \tilde{r} < k$,
  then $s \leq 2$ and $Y$ is trivial.
\end{theorem}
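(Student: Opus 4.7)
The plan is to derive Theorem~\ref{thm:main_cl} from Theorem~\ref{thm:main} by first verifying that a Cameron-Liebler class of parameter $s$ is automatically an $s$-EM family, and then analysing which $s$-EM families can simultaneously be Cameron-Liebler classes of size exactly $s\gauss{n-1}{k-1}_q$. As a preliminary, I would check that the numerical hypothesis of Theorem~\ref{thm:main_cl} implies that of Theorem~\ref{thm:main}: writing $n = mk + r$ with $0 \leq r < k$ and comparing with $n = \tilde{m}k - \tilde{r}$, one separates $r = 0$ from $r > 0$ and verifies $n - 2k - \tilde{r} + 1 \leq n - k - r$, so that $16s \leq q^{(n-2k-\tilde{r}+1)/3}$ forces $16s \leq q^{(n-k-r)/3}$; combined with $n \geq 2k$, the same bound also yields the third estimate $16s \leq q^{n/2 - k + 1}$.

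The heart of the argument is to show that $Y$ has no $s+1$ pairwise disjoint elements. A Cameron-Liebler class of parameter $s$ intersects every spread of $\FF_q^n$ in exactly $s$ elements (where spreads exist), so when $k \mid n$ any $s+1$ pairwise disjoint $k$-spaces in $Y$ would, provided they extend to a full spread, contradict the intersection count. For $s$ in our regime, Beutelspacher-type extension results for partial spreads supply the required extension comfortably. When $k \nmid n$ no spread of $\FF_q^n$ exists, and one must instead fit the hypothetical partial spread into a near-spread of a suitable subspace whose dimension is a multiple of $k$; the slack $\tilde{r}$ is exactly what is needed to carry out this extension, and I expect this is where the minor mistake in \cite{Blokhuis2018} is located.

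Once $Y$ is known to be an $s$-EM family, Theorem~\ref{thm:main} immediately gives that $Y$ is a union of $s$ intersecting families of $k$-spaces. Since $|Y| = s \gauss{n-1}{k-1}_q$ and every intersecting family has size at most $\gauss{n-1}{k-1}_q$, this union must be disjoint and each constituent family must be of maximum size. By the known classification of maximum intersecting families of $k$-spaces in $\FF_q^n$ for $n \geq 2k$, the only candidates are point-pencils (always) and hyperplane-pencils (only when $n = 2k$). A direct check shows that two distinct point-pencils are never disjoint for $k \geq 2$, two distinct hyperplane-pencils are never disjoint, and a point-pencil and a hyperplane-pencil are disjoint precisely when the point is not on the hyperplane. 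This forces $s \leq 1$ when $n > 2k$ and $s \leq 2$ when $n = 2k$, and in each remaining case $Y$ coincides with one of the trivial Cameron-Liebler classes listed in the introduction, completing the proof. The main obstacle I anticipate is the extension step in the case $k \nmid n$: carefully verifying that the near-spread replacement really forces the desired contradiction under the hypothesis $16s \leq q^{(n-2k-\tilde{r}+1)/3}$ is where I expect the bookkeeping to be delicate.
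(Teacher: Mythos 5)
Your overall architecture (show the Cameron--Liebler class is an $s$-EM family, feed it into Theorem~\ref{thm:main}, then classify disjoint unions of maximum intersecting families) agrees with the paper, and both your reduction of the numerical hypotheses and your final classification step ($s\leq 1$ from overlapping dictators when $n>2k$, $s\leq 2$ from dictator/dual-dictator pairs when $n=2k$) are sound. The gap is in the step you yourself call the heart of the argument: proving that $Y$ contains no $s+1$ pairwise disjoint $k$-spaces. Your plan is to extend a hypothetical set of $s+1$ pairwise disjoint members of $Y$ to a full spread and contradict the fact that a Cameron--Liebler class meets every spread in exactly $s$ elements. But small partial spreads need not extend to spreads --- maximal partial spreads far below the spread size exist already for lines in $\FF_q^4$ --- and there is no ``Beutelspacher-type extension result'' covering partial spreads of size up to roughly $q^{(n-2k+1)/3}$ for general $k$ and $n$; Beutelspacher's theorem bounds the maximum size of a partial spread and says nothing about extendability. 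The situation is worse when $k \nmid n$: no spreads exist at all, and restricting $Y$ to a subspace of dimension divisible by $k$ yields a Cameron--Liebler class of that subspace whose parameter is not controlled by $s$, so the intersection count with a spread of the subspace yields no contradiction. Your guess that $\tilde r$ enters through such an extension is also not where it actually comes from.

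The paper proves this step as Lemma~\ref{lem:fix_joz} by repairing the sign error in Lemma 4.6 of \cite{Blokhuis2018}: one counts pairs and triples of pairwise disjoint elements of $Y$ using the exact regularity properties of Cameron--Liebler classes (the number of elements of $Y$ disjoint from one, respectively two, fixed pairwise disjoint $k$-spaces), and shows that the resulting inequality in $\lfloor s\rfloor$, $s$, $\gauss{n-1}{k-1}$ and the quantity $W_\Sigma$ forces at most $s$ pairwise disjoint elements once $s^3 \leq q^{n-2k-\tilde r+1}$. The parameter $\tilde r$ arises there from the divisibility of $s[k]$ by $[n]$, which bounds $\lfloor s\rfloor - s + 1$ from below by $(q-1)q^{-\tilde r - 1}$ --- not from any geometric extension argument. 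Without a counting argument of this kind (or a genuine extension theorem for partial spreads, which you would have to prove and which is not available), your proof does not go through.
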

Our original intent was to improve a result in \cite{Blokhuis2018} for certain choices of parameters, 
but as we discovered a mistake in the argument in \cite{Blokhuis2018}, this is the only such bound at the time of writing.\footnote{Our bound is $Cs \leq q^{\frac{n}{2}-k+1}$ for $n$ large enough while the alleged bound in \cite{Blokhuis2018} is $Cs \leq q^{\frac{n}{2}-k+\frac{1}{2}}$ and only holds for $n \geq 3k$. We consider the behavior for $n$ close to $2k$ as the most interesting.}
Note that the statement is still empty for $2k < n < \frac{5}{2} k$.

\section{Preliminaries}

\subsection{Gaussian Coefficients}

For any real numbers $a$ and $q$, we define $[a]_q := \lim_{r \rightarrow q} \frac{r^a-1}{r-1}$ and, for $b$ an integer, we define the Gaussian coefficient by
\begin{align*}
 \gauss{a}{b}_q = \begin{cases}
                   0 & \text{ if } b < 0,\\
                   \prod_{i=0}^{b-1} \frac{[a-i]_q}{[b-i]_q} & \text{ otherwise.}
                  \end{cases}
\end{align*}
We have $\gauss{a}{b}_1 = \binom{a}{b}$.
We write $[a]$ instead of $[a]_q$ and $\gauss{a}{b}$ instead of $\gauss{a}{b}_q$ as $q$ 
is usually fixed. Note that $\gauss{n}{k}$ corresponds to the number
of $k$-spaces in $\FF_q^n$.
The following can be derived from \cite[Lemma 34]{Ihringer2014b} 
(alternatively, \cite[Lemma 2.1]{Ihringer2015} for $q \geq 3$).
Note that while \cite{Ihringer2015} and \cite{Ihringer2014b} both assume
that $a$ is an integer and $q$ a prime power, the proofs there only use that $q \geq 2$.

\begin{lemma}\label{lem:gauss_bnds}
  Let $a \geq b \geq 0$ and $q \geq 2$. Then
  \begin{align*}
    &q^{b(a-b)} \leq \gauss{a}{b} \leq (1+5q^{-1}) q^{b(a-b)} \leq \frac{7}{2} q^{(b(a-b)} < 4 q^{b(a-b)}
    \intertext{ and, if $q \geq 4$, }
    &q^{b(a-b)} \leq \gauss{a}{b} \leq (1+2q^{-1}) q^{b(a-b)} \leq 2 q^{b(a-b)}.
  \end{align*}
\end{lemma}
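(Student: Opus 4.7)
The plan is to work from the factorization
\begin{equation*}
\gauss{a}{b}_q = \prod_{j=1}^{b} \frac{q^{a-b+j}-1}{q^j-1} = q^{b(a-b)} \prod_{j=1}^{b} \frac{1-q^{-(a-b+j)}}{1-q^{-j}},
\end{equation*}
obtained by pulling the leading power of $q$ out of every factor. Since $a \geq b$ gives $a-b+j \geq j$, each numerator $1-q^{-(a-b+j)}$ dominates the corresponding denominator $1-q^{-j}$, so every factor in the product is $\geq 1$. This yields the lower bound $\gauss{a}{b}_q \geq q^{b(a-b)}$ at once.

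For the upper bound, each numerator is at most $1$, so
\begin{equation*}
\gauss{a}{b}_q \leq q^{b(a-b)} \prod_{j=1}^{b} (1-q^{-j})^{-1} \leq q^{b(a-b)} P_q, \quad \text{where } P_q := \prod_{j \geq 1}(1-q^{-j})^{-1}.
\end{equation*}
The remaining task is the purely numerical pair of inequalities $P_q \leq 1+5q^{-1}$ for $q \geq 2$ and $P_q \leq 1+2q^{-1}$ for $q \geq 4$; the subsequent chain $1+5q^{-1} \leq 7/2 < 4$ at $q=2$ and $1+2q^{-1} \leq 2$ at $q=4$ is immediate by monotonicity in $q$.

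To obtain the two bounds on $P_q$, I would first observe that by Euler's identity $P_q = \sum_{n \geq 0} p(n)q^{-n}$ (where $p(n)$ is the partition function), so $q(P_q - 1) = \sum_{n \geq 1} p(n) q^{1-n}$ is a decreasing function of $q$. Hence each inequality only needs to be verified at the smallest admissible $q$, namely $q=2$ and $q=4$. At those values I would pass to logarithms via the double-sum identity
\begin{equation*}
\log P_q = -\sum_{j \geq 1} \log(1-q^{-j}) = \sum_{k \geq 1} \frac{1}{k(q^k-1)},
\end{equation*}
truncate after the first few terms, and bound the tail geometrically using $q^k - 1 \geq q^k/2$. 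The only delicate point is the case $q=2$, since the true value $P_2 \approx 3.463$ is close to the target $7/2$, so one has to retain enough leading terms (around five) to get a clean inequality rather than rely on a one-line estimate like $-\log(1-x) \leq x/(1-x)$, which turns out to be too loose at $q=2$. Apart from this, every step is routine, and I expect the argument recorded in \cite{Ihringer2014b, Ihringer2015} to be essentially the same factorization followed by an explicit estimate of $P_q$.
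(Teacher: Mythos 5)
Your proof is correct, and it is essentially the argument the paper relies on: the paper gives no proof of this lemma itself but derives it from \cite[Lemma~34]{Ihringer2014b} (alternatively \cite[Lemma~2.1]{Ihringer2015}), where the bound is obtained in the same way, by factoring $q^{b(a-b)}$ out of $\gauss{a}{b}$ and estimating the remaining product by $\prod_{j\ge 1}(1-q^{-j})^{-1}$, with the numerical verification reduced to $q=2$ and $q=4$. Your monotonicity observation for $q(P_q-1)$ and the tail estimate at $q=2$ (where $2(P_2-1)\approx 4.925$ sits close to $5$) are exactly the delicate points, and your plan handles them correctly, including the case of real $a$.
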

We will the lemma without reference throughout the document, 
mostly for $\gauss{a}{b} \leq 4 q^{b(a-b)}$.
For $[a]$ we use the better bound of $[a] \leq \frac{q}{q-1} q^{a-1} \leq 2 q^{a-1}$.
The Gaussian coefficients satisfy the following generalization of Pascal's identity:
\begin{align}
  \gauss{a}{b} = q^b \gauss{a-1}{b} + \gauss{a-1}{b-1} = q^{a-b} \gauss{a-1}{b-1} + \gauss{a-1}{b}.   \label{eq:pascal}
\end{align}

This enables us to make the following useful observation.

\begin{lemma}\label{lem:high_order_cancellation}
 Let $q \geq 2$, $x$ an integer, $a \in \RR$ with $a \geq x \geq 1$, and $b$ an integer with $a \geq b \geq 2$.
 Then 
 \begin{align*}
    \gauss{a}{b} - q^{bx} \gauss{a-x}{b} &\leq \rho (1+\tfrac{1}{q-1}) q^{x+(b-1)(a-b)-1}.
  \intertext{Here, $\rho = 1 + 5q^{-1}$ for $q \in \{2,3\}$ and $\rho = 1+2q^{-1}$ otherwise. In particular,}
    \gauss{a}{b} - q^{bx} \gauss{a-x}{b} & \leq (1+12q^{-1}) q^{x+(b-1)(a-b)-1} \text{ if } q \geq 2, \text{ and }\\
    \gauss{a}{b} - q^{bx} \gauss{a-x}{b} & \leq \frac{3}{2} q^{x+(b-1)(a-b)-1} \text{ if } q \geq 7.
 \end{align*}
\end{lemma}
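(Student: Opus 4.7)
The plan is to apply the Pascal identity \eqref{eq:pascal} a total of $x$ times to get a telescoping expansion, then bound the resulting geometric sum with Lemma \ref{lem:gauss_bnds}. Starting from the form $\gauss{c}{b} = q^b \gauss{c-1}{b} + \gauss{c-1}{b-1}$ and iterating on the first summand, a routine induction on $x$ yields
\[
\gauss{a}{b} \;=\; q^{bx}\gauss{a-x}{b} \;+\; \sum_{i=0}^{x-1} q^{ib}\gauss{a-1-i}{b-1},
\]
so the left-hand side of the claimed inequality equals exactly $\sum_{i=0}^{x-1} q^{ib}\gauss{a-1-i}{b-1}$. That Pascal's identity is valid for real $a$ is an immediate verification from $q^b[a-b]+[b]=[a]$, which only uses $[c]=(q^c-1)/(q-1)$.

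Next, I would apply Lemma \ref{lem:gauss_bnds} to each Gaussian coefficient in the sum, with the value of $\rho$ stated there. This gives $\gauss{a-1-i}{b-1}\leq \rho\, q^{(b-1)(a-b-i)}$, hence
\[
\sum_{i=0}^{x-1} q^{ib}\gauss{a-1-i}{b-1} \;\leq\; \rho\, q^{(b-1)(a-b)} \sum_{i=0}^{x-1} q^{ib-i(b-1)} \;=\; \rho\, q^{(b-1)(a-b)}\cdot\frac{q^x-1}{q-1}.
\]
The exponents here collapse cleanly because the factor $q^{ib}$ from the telescoping exactly cancels the $q^{-i(b-1)}$ coming from the bound on $\gauss{a-1-i}{b-1}$, leaving a pure geometric series in $q$. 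Finishing with $\frac{q^x-1}{q-1}\leq \frac{q^x}{q-1} = \bigl(1+\tfrac{1}{q-1}\bigr) q^{x-1}$ produces the main inequality.

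For the two ``in particular'' specializations it remains to check that $\rho(1+\tfrac{1}{q-1}) \leq 1+12q^{-1}$ for all $q\geq 2$ and that $\rho(1+\tfrac{1}{q-1}) \leq \tfrac32$ for $q \geq 7$. Both expressions are decreasing in $q$ (once one accounts for the case split in the definition of $\rho$), and one verifies equality at the extreme values: at $q=2$ one gets $(1+5/2)\cdot 2 = 7 = 1+12/2$, and at $q=7$ one gets $(1+2/7)\cdot(1+1/6) = (9/7)(7/6) = 3/2$. So these reduce to plugging in two numbers.

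The only delicate point I anticipate is the edge case where $a-1-i < b-1$ for some $i$ in the summation range, since Lemma \ref{lem:gauss_bnds} requires $a\geq b$ to apply. When $a$ is an integer these summands vanish because $[0]$ appears as a factor of the numerator, so the bound holds trivially; for general real $a\in\RR$ with $a\geq x$, a short auxiliary estimate (or equivalently, truncating the telescoping expansion at $i=\lfloor a-b\rfloor$ and checking that the remaining tail is dominated by what the geometric bound already allocates to it) closes the gap. Everything else is straightforward bookkeeping.
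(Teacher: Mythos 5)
Your proposal is correct and follows essentially the same route as the paper: iterate Pascal's identity \eqref{eq:pascal} $x$ times, bound each correction term $\gauss{a-1-i}{b-1}$ via Lemma \ref{lem:gauss_bnds}, and sum the resulting geometric series as $\tfrac{q^x-1}{q-1}\leq(1+\tfrac{1}{q-1})q^{x-1}$; the paper merely bounds at each step rather than writing out the exact telescoped sum, and it leaves implicit the edge case (summands with $a-1-i<b-1$) that you flag explicitly. The final numerical checks for $q\geq 2$ and $q\geq 7$ go through as you say, though note that ``both sides decreasing'' is not by itself a proof --- one should verify the equivalent polynomial inequality directly, which is routine.
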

\begin{proof}
  Equation \eqref{eq:pascal} together with Lemma \ref{lem:gauss_bnds} implies that
  \begin{align*}
    \gauss{a}{b} = q^b \gauss{a-1}{b} + \gauss{a-1}{b-1} \leq q^b \gauss{a-1}{b} + \rho q^{(b-1)(a-b)}.
  \end{align*}
  If we repeat this $x$ times, then we obtain (we bound the geometric series by $\frac{q}{q-1}$)
  \begin{align*}
    \gauss{a}{b} \leq q^{bx} \gauss{a-x}{b} + \rho (1 + \frac{q}{q-1} \cdot q^{-1}) q^{x + (b-1)(a-b) - 1}.
  \end{align*}
  The assertion follows.
\end{proof}
%

\begin{remark}
\begin{enumerate}[(i)]
\item The leading coefficients of $\gauss{a}{b}$ seen as a polynomial in $q$ are the possible ways of partitioning
  $b-1$, so sequence A000041 in OEIS. This can be seen in a similar way.
 \item Surely, the lemma is also true when $x$ is not an integer.
But for general $x$ and $q > 1$, the author can only show that (for some constant $C_q$ depending on $q$)
\begin{align*}
  \gauss{a}{b} - q^{bx} \gauss{a-x}{b} \leq (1 + C_q q^{-1}) q^{2x - \lfloor x \rfloor+(b-1)(a-b)-1}.
\end{align*}
This can be seen by combining the proof given here for $x$ an integer with the technique 
used for the proof of Lemma 7 in \cite{Keevash2008}.
The technique in \cite{Keevash2008} on its own only seems to yield $2x+(b-1)(a-b)-1$ in the exponent.
\end{enumerate}
\end{remark}

\subsection{Geometry}

We rely on the existing results on intersecting families and partial spreads of $k$-spaces in $\FF_q^n$.
If $Y$ is the family of all $k$-spaces containing a fixed point, then we call $Y$ a \textit{dictator}.
If $Y$ is the family of all $k$-spaces contained in a fixed hyperplane, then we call $Y$ a \textit{dual dictator}.
Extending work by Hsieh \cite{Hsieh1975} and Frankl and Wilson \cite{Frankl1986}, Newman showed the following \cite{Newm2004}:

\begin{theorem}
  If $n \geq 2k$, then the size of an intersecting family $Y$ of $k$-spaces in $\FF_q^n$ is at most $\gauss{n-1}{k-1}$.
  Equality holds in one of the following two cases:
  \begin{enumerate}[(i)]
   \item the family $Y$ is a dictator,
   \item we have $n=2k$ and the family $Y$ is a dual dictator.
  \end{enumerate}
\end{theorem}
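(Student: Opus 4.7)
The plan is to apply the Hoffman (ratio) bound to the $q$-Kneser graph $\Gamma = qK(n,k)$, whose vertex set is the collection of all $k$-spaces of $\FF_q^n$ and whose edges are the pairs of disjoint $k$-spaces. An intersecting family is, by definition, an independent set in $\Gamma$, so it suffices to bound $\alpha(\Gamma)$ and to analyse the equality case.

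First I would record that $\Gamma$ is vertex-transitive under $\mathrm{GL}_n(\FF_q)$, hence regular, with valency $d = q^{k^2}\gauss{n-k}{k}$ (the number of $k$-spaces trivially intersecting a fixed $k$-space). Since $\Gamma$ is the distance-$k$ graph in the Grassmann association scheme, its spectrum is known explicitly through the $q$-Eberlein/Krawtchouk polynomials. I would extract the smallest eigenvalue $-\tau$, where $\tau = q^{k(k-1)}\gauss{n-k-1}{k-1}$, and then plug into
\begin{align*}
\alpha(\Gamma) \leq |V(\Gamma)| \cdot \frac{\tau}{d+\tau} = \gauss{n}{k}\cdot\frac{\tau}{d+\tau}.
\end{align*}
Using the $q$-Pascal identity \eqref{eq:pascal} together with the standard identity $\gauss{n}{k} = \gauss{n-1}{k-1} + q^k\gauss{n-1}{k}$, this should collapse exactly to $\gauss{n-1}{k-1}$, matching the size of a dictator and thus establishing the upper bound.

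For the uniqueness I would invoke the equality case of the Hoffman bound: if $Y$ meets the bound, then the characteristic vector $\chi_Y$ lies in $V_0 \oplus V_{-\tau}$, the sum of the all-ones eigenspace and the eigenspace of the smallest eigenvalue. In the language used later in the paper, $\chi_Y$ is then a Boolean degree $1$ function on the Grassmann scheme. The Boolean degree $1$ functions are classified: every $\{0,1\}$-valued vector in $V_0 \oplus V_{-\tau}$ is either the zero vector, the all-ones vector, the indicator of the $k$-spaces through a fixed point (a dictator), or, only when $n = 2k$, the indicator of the $k$-spaces inside a fixed hyperplane (a dual dictator). Note that for $n = 2k$ a dual dictator really is intersecting, since any two $k$-spaces contained in a common $(2k-1)$-space meet in dimension at least $1$, and its size $\gauss{2k-1}{k} = \gauss{n-1}{k-1}$ matches the bound; for $n > 2k$ duality is broken and only dictators survive.

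The main obstacle is the last step: pinning down all $\{0,1\}$-vectors in the two-dimensional combinatorial eigenspace $V_0 \oplus V_{-\tau}$. The eigenvalue computation and the Hoffman bound are essentially mechanical once the scheme parameters are identified, and the extension from Hsieh/Frankl--Wilson to a full classification is precisely where Newman's contribution lies. One would either argue directly, exploiting the transitivity of $\mathrm{GL}_n(\FF_q)$ on flags to show that a nonconstant Boolean element of $V_0 \oplus V_{-\tau}$ must be determined by a single point (or, in the self-dual regime $n=2k$, by a single hyperplane), or appeal to the established classification of Cameron--Liebler type Boolean degree $1$ functions referenced elsewhere in the paper. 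Either route yields the stated dichotomy between dictators and (only when $n=2k$) dual dictators.
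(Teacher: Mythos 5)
This theorem is quoted in the paper as an external result (Hsieh, Frankl--Wilson, Newman \cite{Newm2004}); the paper contains no proof of it, so there is nothing internal to compare against. Judged on its own terms, your sketch of the upper bound is sound and is essentially the standard spectral argument: the $q$-Kneser graph is regular of valency $d=q^{k^2}\gauss{n-k}{k}$, its least eigenvalue for $n\geq 2k$ is $-\tau$ with $\tau=q^{k(k-1)}\gauss{n-k-1}{k-1}$, and the ratio bound does collapse to $\gauss{n-1}{k-1}$ (one checks $\tau\cdot\frac{[n]-[k]}{[k]}=d$, using $[n]-[k]=q^k[n-k]$). The only thing you would still owe there is the verification that $\lambda_1$ really is the minimal eigenvalue among all $\lambda_j$, which is routine.

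The uniqueness part, however, has a genuine gap. You assert that the $\{0,1\}$-valued vectors in $V_0\oplus V_{-\tau}$ are classified as: constants, dictators, and (for $n=2k$) dual dictators. This is false, and the paper itself is built around that falsity: the indicator of the $k$-spaces in a hyperplane lies in $V_0\oplus V_{-\tau}$ for \emph{every} $n$, complements and disjoint unions of a dictator and a dual dictator also lie there, and for $(n,k)=(4,2)$ there exist \emph{non-trivial} Cameron--Liebler line classes for all $q>2$ --- precisely the Boolean degree~$1$ functions you claim cannot exist. The general classification of Boolean degree~$1$ functions on the Grassmann scheme is open; it is only known for small $q$ and conjectural otherwise. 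What saves the theorem is that in the equality case you know more than $\chi_Y\in V_0\oplus V_{-\tau}$: you also know $|Y|=\gauss{n-1}{k-1}$ and, crucially, that $Y$ is an independent set of the $q$-Kneser graph. Extracting the dictator/dual-dictator dichotomy from that combination is exactly the non-mechanical content of Newman's (and Frankl--Wilson's) work, and your proposal does not supply an argument for it; the alternative route you mention (``argue directly, exploiting transitivity'') is a statement of intent rather than a proof. So the bound stands, but the characterization of equality is not established by the proposal as written.
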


We will use the following simple and well-known facts.
\begin{lemma}\label{lem:int_dict}
  \begin{enumerate}[(i)]
   \item Two dictators intersect in at most $\gauss{n-2}{k-2}$ elements.
   \item A dictator and a dual dictator intersect in at most $\gauss{n-2}{k-1}$ elements.
   \item Let $Y$ be a dictator, or let $Y$ be a dual dictator with $n=2k$. A $k$-space not in $Y$ meets at most $[k]\gauss{n-2}{k-2}$ elements of $Y$.\hfill\qedsymbol
  \end{enumerate} 
\end{lemma}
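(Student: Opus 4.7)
All three parts are elementary counts; the plan is to treat them in turn with dimension arguments and a union bound.

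For (i), I would note that two dictators through the same point coincide, so one may assume their defining points $p_1, p_2$ are distinct. Then $L$ belongs to both families iff $L$ contains the $2$-space $\langle p_1, p_2 \rangle$, and the number of $k$-spaces through a fixed $2$-space in $\FF_q^n$ is $\gauss{n-2}{k-2}$. For (ii), I would split on whether the point $p$ of the dictator lies in the hyperplane $H$ of the dual dictator. If $p \notin H$, no $k$-space through $p$ can be contained in $H$, so the intersection is empty. If $p \in H$, the common elements are the $k$-spaces $L$ with $p \in L \subseteq H$; passing to the quotient $H/\langle p\rangle$, which has dimension $n-2$, these correspond bijectively to $(k-1)$-spaces, yielding $\gauss{n-2}{k-1}$.

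For (iii), the strategy is a union bound over the points of $K$. In the dictator case, $Y$ is defined by a point $p$, and since $K \notin Y$ we have $p \notin K$; any $L \in Y$ meeting $K$ non-trivially must contain some point $q$ of $K$, hence the line $\langle p, q \rangle$. Summing over the $[k]$ points of $K$ and using that the number of $k$-spaces through a fixed line is $\gauss{n-2}{k-2}$ gives the bound $[k]\gauss{n-2}{k-2}$. In the dual-dictator case with $n=2k$, the hypothesis $K \not\subseteq H$ forces $\dim(K \cap H) = k-1$; any $L \in Y$ (so $L \subseteq H$) meeting $K$ non-trivially satisfies $L \cap K = L \cap (K \cap H) \neq \{0\}$, hence contains one of the $[k-1]$ points of $K \cap H$, and through each such point there are $\gauss{n-2}{k-1}$ $k$-spaces of $H$, giving a count of at most $[k-1]\gauss{n-2}{k-1}$.

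The only step that requires a small side calculation is reconciling the dual-dictator bound $[k-1]\gauss{n-2}{k-1}$ with the stated $[k]\gauss{n-2}{k-2}$. Expanding both as fractions of $q$-factorials, the ratio simplifies to $[n-k]/[k]$, which equals $1$ precisely when $n=2k$; this is exactly why the lemma restricts the dual-dictator case to $n=2k$. No deeper obstacle is expected — the whole lemma should amount to a few lines once the three cases are written out.
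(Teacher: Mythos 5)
Your proof is correct; the paper states this lemma as a well-known fact with no proof, and your dimension counts (quotienting by the line $\langle p_1,p_2\rangle$, by $\langle p \rangle$ inside $H$, and the union bound over the $[k]$ points of $K$, respectively $[k-1]$ points of $K \cap H$) are exactly the standard argument being taken for granted. The reconciliation $[k-1]\gauss{n-2}{k-1} = [n-k]\gauss{n-2}{k-2}$, which equals $[k]\gauss{n-2}{k-2}$ precisely when $n=2k$, is also right and correctly explains the restriction in part (iii).
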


The following is implied for $n > 2k$ in \cite[Theorem 1.4]{Blokhuis2010}, for large $q$ and $n=2k$ by Blokhuis et al. \cite{Blokhuis2012}, and explicitly shown for $q \geq 4$ and $n=2k$ by the author \cite[Theorem 1.6]{Ihringer2018}.
\begin{theorem}\label{thm:gen_stab}
  Let $n \geq 2k$ and $Y$ is an intersecting family of $k$-spaces in $\FF_q^n$ with $|Y| > 3 [k] \gauss{n-2}{k-2}$. 
  Unless $n=2k$ and $q \in \{ 2,3,4 \}$, or $n=2k+1$ and $q=2$, then $Y$ is contained in a dictator or a dual dictator.
\end{theorem}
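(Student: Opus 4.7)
The plan is to derive this stability statement by assembling existing Hilton--Milner-type results in the appropriate parameter ranges, since the threshold $3[k]\gauss{n-2}{k-2}$ sits precisely at the order of magnitude at which intersecting families outside the two extremal shapes (dictator, dual dictator) are forbidden. The argument naturally splits according to whether $n > 2k$ (where only the dictator is extremal for the intersecting-family problem) or $n = 2k$ (where both are).

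For $n > 2k$ I would apply the Hilton--Milner theorem for vector spaces of Blokhuis, Brouwer, Chowdhury, Frankl, Mussche, Patk\'os, and Sz\H{o}nyi \cite{Blokhuis2010}, which bounds the size of any intersecting family of $k$-spaces not contained in a point-pencil. Morally the proof proceeds by selecting a point $p^*$ of maximum degree in $Y$ and a witness $B \in Y$ avoiding $p^*$: Lemma~\ref{lem:int_dict}(iii) bounds the subfamily through $p^*$ by $[k]\gauss{n-2}{k-2}$, and the remaining elements are controlled by a shadow/covering argument on $B$. One then compares the resulting Hilton--Milner expression against $3[k]\gauss{n-2}{k-2}$ using Lemma~\ref{lem:gauss_bnds}; this is a routine power-of-$q$ comparison that succeeds for all $n \geq 2k+1$ except the borderline case $(n,q)=(2k+1,2)$.

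For $n = 2k$ both dictators and dual dictators are extremal intersecting families of size $\gauss{n-1}{k-1}$, so the Hilton--Milner step must be replaced by a genuine two-sided stability theorem. For $q \geq 5$ I would invoke \cite[Theorem 1.6]{Ihringer2018}, which proves exactly the stability statement needed; for $q$ sufficiently large the spectral-stability result of \cite{Blokhuis2012} offers an alternative route. The excluded cases $q \in \{2,3,4\}$ at $n = 2k$ are those in which the available stability theorems do not reach the threshold $3[k]\gauss{n-2}{k-2}$, reflecting genuine structural obstructions for small $q$.

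The main obstacle is the regime where $n$ is close to $2k$ and $q$ is small: there the Hilton--Milner gap shrinks, the elementary witness-and-count argument sketched above breaks down because both extremal structures compete in size, and one must call on the finer eigenvalue and Boolean-degree analysis of \cite{Ihringer2018} (or the spectral methods of \cite{Blokhuis2012}) to maintain the delicate balance. Away from this critical regime, routine Gaussian-coefficient estimation with Lemma~\ref{lem:gauss_bnds} suffices to bridge any remaining numerical gaps.
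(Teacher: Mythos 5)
Your proposal matches the paper's treatment: Theorem \ref{thm:gen_stab} is not proved in the paper but assembled from the literature in exactly the way you describe, citing \cite[Theorem 1.4]{Blokhuis2010} for $n>2k$, \cite{Blokhuis2012} for $n=2k$ and large $q$, and \cite[Theorem 1.6]{Ihringer2018} for $n=2k$ with small $q$, with the listed exceptional cases being precisely those the cited results do not reach. Your additional sketch of the Hilton--Milner mechanism is consistent with those sources but is not needed beyond the citations.
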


Let
\begin{align*}
  z(n, k, q) := \frac{q^{k+r} [n-k-r]}{[k]} + 1.
\end{align*}
A \textit{partial spread} is a set of pairwise disjoint $k$-spaces.
Beutelspacher showed the following \cite{Beutelspacher1975}.
\begin{theorem}
  Let $n = mk + r$ with $0 \leq r < k$. Then the largest partial spread of $k$-spaces of $\FF_q^n$ has size
  $z(n, k, q)$.
\end{theorem}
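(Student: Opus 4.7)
The statement has two halves: exhibiting a partial spread of size $z(n,k,q)$ and proving that no partial spread can exceed this size. Write $n = mk + r$ with $0 \leq r < k$, and first note the clean reformulation of the upper bound. If $|S| = t$, the number of projective points of $\FF_q^n$ not covered by $\bigcup_{W\in S} W$ is
\[
  h = [n] - t[k].
\]
Using the identity $[n] = q^{k+r}[n-k-r] + [k+r]$ and the definition of $z(n,k,q)$, a short computation shows $t \leq z(n,k,q)$ if and only if $h \geq q^k[r]$. So everything reduces to bounding the hole count from below.

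\textbf{Construction (lower bound).} When $r=0$, identify $\FF_q^{mk} \cong \FF_{q^k}^m$ and take the one-dimensional $\FF_{q^k}$-subspaces; these are pairwise trivially intersecting as $\FF_q$-subspaces and number $[n]/[k] = z(n,k,q)$. When $r > 0$, decompose $\FF_q^n = V \oplus U$ with $\dim V = (m-1)k$ and $\dim U = k+r$, take a spread $\Sigma_0$ of $V$, and augment by carefully parametrising the $q^{k+r}/q^k = q^r$-many additive cosets of a fixed $k$-subspace $K \subseteq U$ to lift each $W \in \Sigma_0$ to many pairwise disjoint $k$-spaces of $\FF_q^n$, plus including $K$ itself to account for the ``$+1$'' in $z(n,k,q)$. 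Totalling carefully yields exactly $\frac{q^n - q^{k+r}}{q^k-1} + 1$.

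\textbf{Upper bound.} I would argue by induction on $n$ using a projection from a hole. If $h = 0$ (no uncovered points), then $[k] \mid [n]$, so $k \mid n$, so $r=0$, and the bound $h \geq q^k[r] = 0$ is vacuous. Otherwise pick a hole $P$ and project $\FF_q^n \to \FF_q^n/\langle P\rangle \cong \FF_q^{n-1}$: since no $W \in S$ contains $P$, each $W$ injects to a $k$-space $\bar W$, and the images remain pairwise disjoint (a common vector in $\bar W_1 \cap \bar W_2$ would come from $W_1 \cap (W_2 + \langle P\rangle)$, a $1$-space in $W_1$ that maps non-trivially to $\bar W_2$, forcing $W_1 \cap W_2 \neq 0$). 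This yields a partial spread of the same cardinality $t$ in a $(n-1)$-dimensional space, and the key observation is that a projective point $R$ of $\FF_q^n/\langle P\rangle$ is a hole for the projected spread iff all $q$ projective preimages of $R$ in $\FF_q^n$ (other than $P$) are holes for $S$. Tracking this gives a controlled recursion between the hole counts of $S$ and of the projected spread, and the induction closes to yield $h \geq q^k[r]$.

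\textbf{Main obstacle.} The subtle point is making the induction step tight enough to recover the constant $q^k[r]$ after projection: each projection potentially loses a factor $q$ in the hole count, while the target bound changes from $q^k[r]_q$ in dimension $n$ to $q^k[r-1]_q$ (or to $q^k[k-1]_q$ when one crosses a boundary $r \to 0$) in dimension $n-1$. Aligning these factors, and handling the base case $r = 0$ where the bound is $t \leq [n]/[k]$ by the trivial point-covering inequality, is where the classical Beutelspacher argument must be invoked with care. The construction side is routine once the decomposition $V \oplus U$ is fixed, but verifying pairwise disjointness of the lifted $k$-spaces requires checking that the chosen parametrisation of cosets of $K$ behaves well against $\Sigma_0$.
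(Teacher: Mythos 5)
The paper does not prove this statement at all --- it is quoted from Beutelspacher \cite{Beutelspacher1975} as a known result --- so there is no in-paper argument to compare against and I judge your sketch on its own terms. The decisive problem is the upper-bound step: projecting from a hole $P=\langle p\rangle$ does \emph{not} carry a partial spread to a partial spread. If $p=w_1+w_2$ with $0\neq w_i\in W_i$ and $W_1\cap W_2=0$, then $P$ lies in neither $W_1$ nor $W_2$ (so $P$ can perfectly well be a hole), yet in $\FF_q^n/\langle P\rangle$ one has $\bar w_1=-\bar w_2\neq 0$, so $\bar W_1\cap\bar W_2\neq 0$. Your justification only derives $W_1\cap(W_2+\langle P\rangle)\neq 0$, which does not contradict $W_1\cap W_2=0$; the images stay disjoint only for holes outside every $W_i+W_j$, and since $W_i+W_j$ has $[2k]$ points of which $W_i\cup W_j$ covers only $2[k]$, bad holes are generically abundant. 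The induction therefore never starts. The construction half also does not add up: each $W\in\Sigma_0$ must be replaced by $q^{k+r}$ (not $q^{k+r}/q^k=q^r$) pairwise disjoint $k$-spaces partitioning the points of $W\oplus U$ lying outside $U$, and producing these requires a genuine ingredient --- a partial spread of a $(2k+r)$-space covering exactly the complement of a $(k+r)$-subspace, i.e.\ an MRD-code/linearized-polynomial construction --- which ``additive cosets of $K$'' does not supply. (Your reformulation $t\le z \Leftrightarrow h\ge q^k[r]$ is correct.)

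More fundamentally, the upper bound you are trying to prove is false as stated for some parameters, so no argument can close: for $q=2$, $k=3$, $n=8$ one has $z(8,3,2)=33$, but partial $3$-spreads of size $34$ exist (El-Zanati, Jordon, Seelinger, Sissokho and Spence, 2010). Beutelspacher proved the construction for all $r$ but the matching upper bound only for $r\le 1$; the sharp bound for $r\ge 2$ is a much later theorem of N\u{a}stase and Sissokho and needs the hypothesis $k>[r]$. This does not damage the present paper, which only uses the existence of partial spreads of size $z$ together with the transitivity of $\PGL(n,q)$, but it does mean your proof target needs an extra hypothesis. In the cases where the statement is true, the standard elementary route to the upper bound is not projection but hole-counting on hyperplanes: each hyperplane meets every spread element in $[k]$ or $[k-1]$ points, and a second-moment computation over all hyperplanes bounds $h$ from below.
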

When $n, k, q$ are clear from the context, we write $z$ instead of $z(n, k, q)$.
While we are not concerned about large $s$, note that this implies $s \leq z$ is an upper bound on $s$
which is in general smaller than the trivial bound of $[n]/[k]$. For instance $z(5, 2, q) = q^3+1$, while
$[5]/[2] = q^3+q + \frac{1}{q+1}$.
We denote a partial spread of size $z$ as a \textit{$z$-spread}.
We will also need the well-known fact that a $k$-space is disjoint to
\begin{align}
  q^{k\ell} \gauss{n-k}{\ell} \label{eq:nmb_disj}
\end{align}
$\ell$-spaces of $\FF_q^n$ \cite[Theorem 3.3]{Hirschfeld1998}.
It follows that if we fix two disjoint $k$-spaces $A$ and $B$, then at least
\begin{align*}
  q^{k^2} \gauss{n-k}{k} - [k] \gauss{n-1}{k-1}
\end{align*}
$k$-spaces are disjoint to both of them. Reason is that $A$ is disjoint to $q^{k^2} \gauss{n-k}{k}$ $k$-spaces.
As $B$ has $[k]$ points and each point lies in $\gauss{n-1}{k-1}$ $k$-spaces, $B$ meets at most $[k] \gauss{n-1}{k-1}$
of these.

Let $n_i$ be the number of $z$-spreads through $i$ fixed, pairwise disjoint $k$-spaces.
An easy double counting argument shows (for instance, see \cite{Blokhuis2018,Rodgers2018}) that
\begin{align*}
  &\frac{n_1}{n_2} = \frac{q^{k^2} \gauss{n-k}{k}}{z-1} = \frac{q^{k^2} [k]}{q^{k+r} [n-k-r]} \gauss{n-k}{k},\\
  &\frac{n_2}{n_3} = \frac{q^{k^2} \gauss{n-k}{k} - [k] \gauss{n-1}{k-1}}{z-2}.
\end{align*}

\section{Proof of the Main Theorem}

In this section we consider a $s$-EM family of $k$-spaces in $\FF_q^n$.
Recall that $n = mk+r$ with $0 \leq r < k$ and that $\ell$ is the integer with 
$[\ell-1] < s \leq [\ell]$.
We assume that $Y$ has size at least
\begin{align*}
  y := s \left( \gauss{n-1}{k-1} - [\ell-1] \gauss{n-2}{k-2} \right).
\end{align*}
If we take $s$ points in an $\ell$-space and let $Y$ be the family of $k$-spaces
which contain at least one of these points, then it is easy to see that $|Y| \geq y$.
Hence, we show Theorem \ref{thm:main} by showing the following stability version of it.

\begin{theorem}\label{thm:main_der}
  Let $n \geq 2k$ and let $Y$ be a $s$-EM family of $k$-spaces in $\FF_q^n$ of size at least $y$.
  If $16 s \leq$ $\min\{$ $q^{\frac{n-k-\ell+2}{3}},$ $q^{\frac{n-k-r}{3}},$ $q^{\frac{n}2-k+1} \}$,
  then $Y$ is the union of $s$ intersecting families.
\end{theorem}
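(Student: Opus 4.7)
The plan is to apply Metsch's partial-spread counting technique directly and extract a set of at most $s$ ``popular points'' whose dictators jointly cover $Y$. The starting observation is that for every $z$-spread $Z$ of $\FF_q^n$ (of size $z = z(n,k,q)$), the $s$-EM hypothesis forces $|Y \cap Z| \leq s$, hence $|Y \cap Z|^2 \leq s|Y \cap Z|$. Summing over all $z$-spreads and using the two double-counting identities
\begin{align*}
\sum_Z |Y \cap Z| = |Y|\, n_1, \qquad \sum_Z |Y \cap Z|^2 = |Y|\, n_1 + 2 D\, n_2,
\end{align*}
where $D$ denotes the number of disjoint pairs in $Y$, yields $D \leq \frac{(s-1)|Y|\, n_1}{2 n_2}$. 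The explicit ratio $n_1/n_2$ recorded in the preliminaries, together with the hypothesis $|Y| \geq y$, then gives a lower bound on the number of \emph{intersecting} pairs in $Y$.

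From this bound I extract popular points. Each intersecting pair shares at least one point, so $\sum_P \binom{w(P)}{2} \geq \binom{|Y|}{2} - D$, where $w(P) := |\{K \in Y : P \in K\}|$, while $\sum_P w(P) = |Y|\,[k]$. A second-moment and pigeonhole calculation then produces a set $\mathcal{S}$ of at most $s$ points, each satisfying $w(P) \geq \gauss{n-1}{k-1} - [\ell-1]\gauss{n-2}{k-2}$, so that each $P \in \mathcal{S}$ is nearly a full dictator inside $Y$. The three regime bounds $16s \leq q^{(n-k-\ell+2)/3},\ q^{(n-k-r)/3},\ q^{n/2-k+1}$ calibrate exactly this step: respectively sharpening the $\binom{|Y|}{2}/D$ comparison, controlling the $n_1/n_2$ factor via $n = mk+r$, and invoking Theorem \ref{thm:gen_stab} to rule out dual-dictator behaviour in the $n = 2k$ regime.

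It then remains to verify that every $K \in Y$ contains at least one point of $\mathcal{S}$; once this is established, $Y$ is covered by the $|\mathcal{S}| \leq s$ dictators at these points and the proof is complete. I would argue by contradiction: if some $K_0 \in Y$ avoided $\mathcal{S}$, then inside each near-dictator at $P_i \in \mathcal{S}$ one could iteratively pick a representative $K_i \in Y$ through $P_i$ disjoint from $K_0 \cup K_1 \cup \cdots \cup K_{i-1}$. The count \eqref{eq:nmb_disj} of $k$-spaces through $P_i$ disjoint from the spanned $(ik)$-space, combined with Lemma \ref{lem:int_dict}, exceeds the dictator deficit $[\ell-1]\gauss{n-2}{k-2}$ under our hypotheses on $s$, so each choice succeeds. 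This produces $s+1$ pairwise disjoint elements of $Y$, contradicting the $s$-EM property.

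The principal obstacle is the popular-point extraction in the second paragraph: merely locating one point with $w(P) = \Omega(\gauss{n-1}{k-1})$ is insufficient. We need all $s$ popular points to lie \emph{simultaneously} within additive $[\ell-1]\gauss{n-2}{k-2}$ of the dictator maximum, so that the covering argument of the third paragraph has the slack it needs. A naive induction on $s$ does not deliver this, since peeling off a single near-dictator of size $\gauss{n-1}{k-1}$ leaves a residual of size at least $y - \gauss{n-1}{k-1}$, which in general falls just short of $y_{s-1}$ because $\ell(s-1)$ can be strictly less than $\ell(s)$. The work is therefore done globally in one step, and achieving the required tightness by simultaneously balancing the spread-counting estimate against all three hypotheses on $s$ is where the technical core of the argument lies.
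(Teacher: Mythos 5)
There is a genuine gap, and it sits exactly where you place it: the extraction of $s$ points each with $w(P) \geq \gauss{n-1}{k-1} - [\ell-1]\gauss{n-2}{k-2}$. This step is not merely technically delicate; it is false in the regime $n=2k$. Take $n=2k$, $s=2$, $q$ large enough that all three hypotheses hold, and let $Y$ be the union of two dual dictators, i.e.\ all $k$-spaces contained in one of two fixed hyperplanes. This is a $2$-EM family of size $2\gauss{n-1}{k-1}-\gauss{n-2}{k-2}\geq y$, yet every point lies on at most $2\gauss{n-2}{k-1}$ members of $Y$, a factor of order $q^{k-1}$ below your threshold: there are no popular points at all. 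Theorem \ref{thm:gen_stab} does not ``rule out dual-dictator behaviour'' at $n=2k$ --- it explicitly permits it, and by Newman's theorem dual dictators are maximum intersecting families there, so any point-based covering must fail. Even for $n>2k$, where dictators are the only extremal families, the two moment identities you have, namely $\sum_P w(P)=[k]\,|Y|$ and $\sum_P \binom{w(P)}{2}\geq \binom{|Y|}{2}-D$ with $D\leq \frac{(s-1)|Y|n_1}{2n_2}$, do not isolate $s$ near-maximal points. Writing $G=\gauss{n-1}{k-1}$, the lower bound they give is $\sum_P\binom{w(P)}{2}\gtrsim \frac{s}{2}G^2$, and a weight distribution with, say, $4s$ points of weight $G/2$ meets both constraints while violating your conclusion by a huge margin. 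The additive precision $[\ell-1]\gauss{n-2}{k-2}$, i.e.\ relative error $q^{\ell-2-(n-k)}$, is far beyond what any second-moment/pigeonhole argument can deliver, so the ``technical core'' you defer is not a computation to be balanced but a missing idea.

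The paper avoids this by localizing not at points but at subspaces. It fixes a single well-chosen $z$-spread $Z$ with $Y\cap Z=\{S_1,\ldots,S_s\}$ all ``good'' in the sense that $w_{S_i}$ is close to $s$ (Corollary \ref{cor:good_spread_ex}), and defines $\scrE_i$ to be the members of $Y$ meeting $S_i$ and disjoint from every other $S_j$. Per-element and per-pair disjointness counts via the ratios $n_1/n_2$ and $n_2/n_3$ (Lemma \ref{lem:Y_neighs}) show each $\scrE_i$ is an intersecting family of size at least $\frac{39}{64}\gauss{n-1}{k-1}$, whereupon the stability theorem (Theorem \ref{thm:gen_stab}) places each $\scrE_i$ inside a dictator \emph{or} dual dictator $\scrE_i'$ --- accommodating the $n=2k$ configurations your approach cannot. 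Your aggregate bound on the number of disjoint pairs $D$ is a weaker, averaged version of the per-element statement the paper needs. The final contradiction step you sketch (a leftover $T$ plus one representative from each $\scrE_i$ gives $s+1$ pairwise disjoint members) is essentially the paper's, though you should avoid the span of $K_0\cup\cdots\cup K_{i-1}$ and instead count $k$-spaces missing each $K_j$ individually, as the span may be too large for \eqref{eq:nmb_disj} to leave any room.
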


\paragraph*{Assumption} From now on we assume that $16 s \leq \min\{$ $q^{\frac{n-k-\max(r, \ell-2)}{3}},$ $q^{\frac{n}{2}-k+1} \}$ till the end of the section.
Hence, using $16s \leq q^{\frac{n}{2}-k+1}$, we assume that that $n \geq 2k+8$ if $q = 2$, $n \geq 2k+5$ if $q \leq 3$, $n \geq 2k+3$ if $q \leq 4$, $n \geq 2k+3$ if $q \leq 5$, $n \geq 2k+2$ if $q \leq 9$, and $n \geq 2k+1$ if $q \leq 31$ as the theorem does not say anything non-trivial for the excluded cases.
Recall that the first interesting case is $s=2$, so we also assume $s \geq 2$.

\begin{lemma}\label{lem:spread_avg}
  Let $Z$ be a $z$-spread (chosen uniformly and randomly out of all $z$-spreads). Then
  \begin{align*}
    \EE(|Y \cap Z|) > s - 4 \tau s \frac{[k-1][\ell-1]}{[n-1]},
  \end{align*}
  where $\tau = 1$ if $\ell \geq r+2$ and $\tau = q^{r-\ell+2}$ otherwise.
\end{lemma}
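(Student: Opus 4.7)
The plan is to leverage $\PGL$-transitivity on the Grassmannian: every $k$-space lies in the same number $n_1$ of $z$-spreads, so double-counting the incidences (spread, $k$-space in spread) gives $n_0 z = n_1 \gauss{n}{k}$, and hence
\[
  \EE(|Y \cap Z|) = \sum_{W \in Y} \PP(W \in Z) = |Y|\,\frac{n_1}{n_0} = \frac{|Y|\,z}{\gauss{n}{k}} \geq \frac{y\,z}{\gauss{n}{k}}.
\]
Expanding $y$ and using $\gauss{n-1}{k-1}/\gauss{n}{k} = [k]/[n]$ together with $\gauss{n-2}{k-2}/\gauss{n}{k} = [k][k-1]/([n][n-1])$, this becomes
\[
  \frac{y z}{\gauss{n}{k}} = s\cdot\frac{z[k]}{[n]}\cdot\left(1 - \frac{[\ell-1][k-1]}{[n-1]}\right).
\]

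The key computation is to evaluate $z[k]/[n]$ exactly. Since $z[k] = q^{k+r}[n-k-r] + [k] = (q^n - q^{k+r} + q^k - 1)/(q-1)$ and $[n] = (q^n-1)/(q-1)$, a short rearrangement yields the clean identity
\[
  \frac{z[k]}{[n]} = 1 - \frac{q^k\,[r]}{[n]}.
\]
Setting $A := q^k[r]/[n]$ and $B := [\ell-1][k-1]/[n-1]$ and applying $(1-A)(1-B) \geq 1 - A - B$, the desired bound $\EE(|Y \cap Z|) > s - 4\tau s B$ reduces to the strict inequality $A/B < 4\tau - 1$.

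To establish this I would estimate
\[
  \frac{A}{B} = \frac{q^k\,[r]\,[n-1]}{[\ell-1]\,[k-1]\,[n]} < \frac{q^r}{[\ell-1]},
\]
using $[n-1]/[n] < 1/q$ (equivalent to $q[n-1] = [n]-1 < [n]$) and $[r]/[k-1] \leq q^{r-k+1}$ (valid for $r < k$). A case split on the definition of $\tau$ then finishes: if $\ell \geq r+2$, then $\tau = 1$ and $[\ell-1] \geq q^{\ell-2} \geq q^r$, so $A/B < 1 \leq 3 = 4\tau-1$; if $\ell \leq r+1$, then $\tau = q^{r-\ell+2} \geq q \geq 2$ and $[\ell-1] \geq q^{\ell-2}$ give $A/B < q^{r-\ell+2} = \tau \leq 4\tau - 1$. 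I expect no real obstacle: the only nontrivial step is the exact identity $z[k]/[n] = 1 - q^k[r]/[n]$, and the factor $4$ in the statement provides precisely the slack absorbed by $A$ in the telescoping estimate $(1-A)(1-B) \geq 1 - A - B$.
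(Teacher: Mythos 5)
Your proof is correct and follows essentially the same route as the paper: both compute $\EE(|Y \cap Z|) = |Y|\,z/\gauss{n}{k}$ via transitivity, lower-bound $|Y|$ by $y$, and then show that the deficiency of $z$ relative to $[n]/[k]$ is absorbed by the factor $4\tau$ in front of $s[k-1][\ell-1]/[n-1]$. The only difference is cosmetic: you use the exact identity $z[k]/[n] = 1 - q^k[r]/[n]$ where the paper uses a geometric-series bound on $[n]/(q^{k+r}[n-k-r])$, which makes your concluding case analysis somewhat cleaner than the paper's polynomial inequality check.
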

\begin{proof}
  Using $n - k - r \geq k \geq 2$ and the limit of the geometric series to bound $[n]/[n-k-r]$, we obtain
  \begin{align*}
    \frac{\gauss{n}{k}}{|Z|} &\leq \frac{[n]}{[k]} \gauss{n-1}{k-1} \cdot \frac{[k]}{q^{k+r} [n-k-r]}\\
    &= \frac{[n]}{q^{k+r} [n-k-r]} \gauss{n-1}{k-1} \leq (1 + \frac{4}{3} q^{k+r-n}) \gauss{n-1}{k-1}.
  \end{align*}
  As $\PGL(n, q)$ acts transitively on $k$-spaces, the average size of the intersection is
  \begin{align*}
    \frac{|Y|\cdot |Z|}{\gauss{n}{k}} &\geq \frac{y} {(1 + \frac{4}{3} q^{k+r-n}) \gauss{n-1}{k-1}}\\
    &\geq s (1 - \frac{4}{3} q^{k+r-n}) \left( 1 - \frac{[k-1][\ell-1]}{[n-1]} \right)\\
    &\geq s - \frac{4}{3} s q^{k+r-n} - s \frac{[k-1][\ell-1]}{[n-1]}.
  \end{align*}
  We have to show that $\frac{4}{3} q^{k+r-n} \leq 3 \tau \frac{[k-1][\ell-1]}{[n-1]}$.
  As $\tau \leq q^{r-\ell+2}$, it suffices to show that
  \begin{align*}
    \frac{9}{4} \left( q^{k+\ell-2} - q^{k-1} - q^{\ell-1} + 1 \right) \geq q^{k+\ell-2} - q^{k+\ell-3} - q^{k+\ell-n-1} + q^{k+\ell-n-2}.
  \end{align*}
  As $q^{k+\ell-n-1} - q^{k+\ell-n-2} > 0$, this is implied by
  \begin{align*}
    \frac{5}{4} q^{k+\ell-2} + q^{k+\ell-3} + \frac{9}{4} \geq \frac{9}{4}\left( q^{k-1} + q^{\ell-1} \right).
  \end{align*}
  Due to monotonicity and $k, \ell, q \geq 2$, we only have to check this inequality for $k=\ell=q=2$.
\end{proof}

From here on, let $\tau$ be as in Lemma \ref{lem:spread_avg}.
For a $k$-space $S$, let $w_S$ denote $\EE(|Y \cap Z|: S \in Z)$ for all $z$-spreads $Z$ which contain $S$.

\begin{corollary}\label{cor:good_spread_ex}
 There exists a $z$-spread $Z$ such that all elements $S \in Y \cap Z$ satisfy $w_S > s - 4\tau s^2 \frac{[k-1][\ell-1]}{[n-1]}$
\end{corollary}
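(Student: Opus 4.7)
The strategy is a standard first-moment / Markov argument on a uniformly random $z$-spread. Set $\delta_0 := 4\tau s\frac{[k-1][\ell-1]}{[n-1]}$, so that Lemma~\ref{lem:spread_avg} reads $\mu := \EE_Z[|Y \cap Z|] > s - \delta_0$ and the conclusion we want, $w_S > s - 4\tau s^2\frac{[k-1][\ell-1]}{[n-1]}$, is exactly $s - w_S < s\delta_0$ for every $S \in Y \cap Z$. My plan is to show that
\begin{align*}
  \EE_Z\!\left[\sum_{S \in Y \cap Z} (s - w_S)\right] < s\delta_0,
\end{align*}
and then apply Markov's inequality: since each summand is non-negative, any spread $Z$ whose sum is strictly less than $s\delta_0$ automatically satisfies $s - w_S < s\delta_0$ for every one of its members in $Y$, which is the claim.

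To compute the expectation I would use double counting through $\PGL(n,q)$-transitivity. Let $N$ denote the total number of $z$-spreads and $n_1$ the common number of $z$-spreads through a given $k$-space; both are positive and $n_1$ is independent of the chosen $k$-space because $\PGL(n,q)$ acts transitively on $k$-spaces and $z$-spreads exist by Beutelspacher's theorem. Then $\mu = |Y|\, n_1/N$, and unwinding the definition $w_S = \EE(|Y\cap Z|\mid S\in Z)$ by swapping the order of summation gives
\begin{align*}
  \sum_Z \sum_{S \in Y \cap Z} w_S \;=\; n_1 \sum_{S \in Y} w_S \;=\; \sum_Z |Y \cap Z|^2.
\end{align*}
Dividing by $N$ yields $\EE_Z\!\left[\sum_{S \in Y \cap Z} w_S\right] = \EE_Z[|Y\cap Z|^2]$, and therefore
\begin{align*}
  \EE_Z\!\left[\sum_{S \in Y \cap Z}(s - w_S)\right] \;=\; s\mu - \EE_Z[|Y\cap Z|^2] \;\leq\; s\mu - \mu^2 \;=\; \mu(s-\mu),
\end{align*}
using only that $\mathrm{Var}(|Y\cap Z|) \geq 0$.

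Finally, Lemma~\ref{lem:spread_avg} gives $s - \mu < \delta_0$, and the $s$-EM property forces $|Y\cap Z| \leq s$ for every $z$-spread $Z$ (a spread is in particular a set of pairwise disjoint $k$-spaces), so $\mu \leq s$. Combining these, $\mu(s-\mu) < \mu\,\delta_0 \leq s\delta_0$, completing the estimate of the expectation and hence, by Markov, the proof.

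There is no real obstacle here beyond the bookkeeping: the slack that makes the argument go through is exactly the extra factor of $s$ separating the individual threshold $s\delta_0$ from the averaged bound of order $\delta_0$. The one small item worth flagging is the well-definedness of $w_S$ for every $k$-space $S$, which follows immediately from Beutelspacher's theorem together with $\PGL(n,q)$-transitivity on $k$-spaces.
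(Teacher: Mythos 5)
Your proposal is correct and takes essentially the same route as the paper: an averaging argument over a uniformly random $z$-spread, the double-counting identity $\EE_Z\bigl[\sum_{S \in Y \cap Z} w_S\bigr] = \EE_Z\bigl[|Y\cap Z|^2\bigr]$, and the observation that since each deficit $s - w_S$ is non-negative and their total is less than $s\delta_0$, each individual deficit is too. In fact your handling of the second moment (via $\EE[|Y\cap Z|^2] \geq \mu^2$ and then $\mu(s-\mu) < \mu\delta_0 \leq s\delta_0$) is a slightly more careful justification of the paper's terse ``by averaging'' step, which asserts $\sum_{S\in Y\cap Z} w_S \geq s(s-\delta_0)$ for some $Z$ before running the same worst-case (all-but-one-term-equal-to-$s$) pigeonhole.
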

\begin{proof}
  By averaging and Lemma \ref{lem:spread_avg}, we find a $z$-spread $Z$ with $\sum_{S \in Y \cap Z} w_S \geq s(s-4 \tau s \frac{[k-1][\ell-1]}{[n-1]})$.
  We have $w_S \leq s$. The worst case is that $s-1$ elements $S \in Y \cap Z$ have $w_S = s$. Then the remaining element $T$ satisfies
  \begin{align*}
    w_T \geq \sum_{S \in Y \cap Z} w_S - (s-1)s = s - 4\tau s^2 \frac{[k-1][\ell-1]}{[n-1]}.
  \end{align*}
  This shows the claim.
\end{proof}

Let $Y'$ be the set of elements $S \in Y$ such that $\EE(|Y \cap Z|) \geq s - 4\tau s^2 \frac{[k-1][\ell-1]}{[n-1]}$ for all $z$-spreads $Z$ with $S \in Z$.
\begin{lemma}\label{lem:Y_neighs}
  \begin{enumerate}[(i)]
   \item An element $S \in Y$ meets at least \[  \gauss{n-1}{k-1} \left( 1 - 2s q^{\ell+k-n-2} - 2 (s-1) q^{k+r-n} \right) \] elements of $Y$.
   \item For $S, T \in Y'$, there are at most \[ 2 q^{(k-2)(n-k+1)+1} + 12 s q^{(k-2)(n-k+1)}  + 128 \tau s^2 q^{(k-2)(n-k)+\ell-2} \] elements of $Y$ which meet $S$ and $T$.
  \end{enumerate}
\end{lemma}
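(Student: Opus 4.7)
My plan for (i) is the standard spread double-counting. Fix $S \in Y$ and set $D(S) = \{T \in Y \setminus \{S\} : T \cap S = 0\}$. Every $z$-spread $Z$ through $S$ satisfies $|Y \cap Z| \leq s$ by the $s$-EM hypothesis, hence $|Z \cap D(S)| \leq s-1$. Double-counting pairs $(Z,T)$ with $T \in Z \cap D(S)$—noting that $S$ lies in $n_1$ spreads and each $T \in D(S)$ lies in exactly $n_2$ spreads through $\{S,T\}$—gives $|D(S)| \leq (s-1)\,n_1/n_2$. The count of elements of $Y$ meeting $S$ and distinct from $S$ is then at least $y - 1 - (s-1)\,n_1/n_2$. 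Substituting $y = s\gauss{n-1}{k-1} - s[\ell-1]\gauss{n-2}{k-2}$ and $n_1/n_2 = q^{k^2}[k]\gauss{n-k}{k}/(q^{k+r}[n-k-r])$, the two error terms appear naturally: via $[k-1][\ell-1]/[n-1] \leq 2q^{\ell+k-n-2}$ the subtracted $s[\ell-1]\gauss{n-2}{k-2}$ is absorbed by $-2sq^{\ell+k-n-2}\gauss{n-1}{k-1}$, and using the identity $\gauss{n}{k} - z\gauss{n-1}{k-1} = q^k[r]\gauss{n-1}{k-1}/[k]$ together with Lemma \ref{lem:gauss_bnds}, the residual $(s-1)(n_1/n_2 - \gauss{n-1}{k-1})$ is absorbed by $-2(s-1)q^{k+r-n}\gauss{n-1}{k-1}$.

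For (ii) I combine the same double-counting with the two-sided control afforded by $Y'$. The defining inequality of $Y'$ together with the identity $w_S = 1 + |D(S)|\,n_2/n_1$ furnishes the matching lower bound $|D(S)| \geq (s-1-\epsilon)\,n_1/n_2$ with $\epsilon = 4\tau s^2 [k-1][\ell-1]/[n-1]$, and the same for $|D(T)|$. When $S \cap T = 0$, repeating the double-counting but now with spreads through both $S$ and $T$ yields $|D(S) \cap D(T)| \leq (s-2)\,n_2/n_3$. Inclusion–exclusion then gives $|\{W \in Y : W \cap S \neq 0 \text{ and } W \cap T \neq 0\}| = |Y| - |D(S)| - |D(T)| + |D(S) \cap D(T)|$, and together with the partial-spread upper bound $|Y| \leq s\gauss{n}{k}/z$ the three $s\gauss{n-1}{k-1}$-scale leading terms cancel because $s - 2(s-1) + (s-2) = 0$. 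The three residual pieces in the target bound correspond to: the next-order correction $\gauss{n}{k}/z - \gauss{n-1}{k-1}$, giving the $2q^{(k-2)(n-k+1)+1}$ term; the analogous corrections in $n_1/n_2$ and $n_2/n_3$ against $\gauss{n-1}{k-1}$, scaled by the factor $(s-2)-2(s-1)$, accounting for the $12s\,q^{(k-2)(n-k+1)}$ term; and finally the $Y'$-slack $2\epsilon\,n_1/n_2$, which after using $[k-1][\ell-1]/[n-1] \leq 2q^{\ell+k-n-2}$ and the estimate $n_1/n_2 \leq (1+O(q^{-1}))\gauss{n-1}{k-1}$ evaluates to the $128\tau s^2 q^{(k-2)(n-k)+\ell-2}$ term.

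The main obstacle I expect is keeping the explicit constants $2$, $12s$, and $128\tau s^2$ under control through the many invocations of Lemma \ref{lem:gauss_bnds} and Lemma \ref{lem:high_order_cancellation}: each estimate introduces a $1 + O(q^{-1})$ slack that must be absorbed without inflating the stated coefficients, and matching the $12$ and $128$ is essentially a bookkeeping exercise requiring careful use of the $q \geq 2$ versions of those lemmas. A secondary difficulty is the case $S \cap T \neq 0$ in (ii): here no $z$-spread contains both $S$ and $T$, so the clean double-counting bound on $|D(S) \cap D(T)|$ is unavailable. I would handle this by a direct geometric case analysis on $j := \dim(S \cap T)$, counting $k$-spaces through $S \cap T$ via Lemma \ref{lem:int_dict}(iii) applied to the dictator of $S \cap T$ and the remaining ``off-axis'' contributions via $q$-Vandermonde, showing that the total still fits inside the claimed bound (with the third term, driven by $Y'$, providing the slack needed when $j = 1$).
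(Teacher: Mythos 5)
Your proposal follows essentially the same route as the paper: both parts are obtained by double counting over $z$-spreads through $S$ (resp.\ through the disjoint pair $S,T$), using the ratios $n_1/n_2$ and $n_2/n_3$, the identity $|D(S)|=(w_S-1)\,n_1/n_2$ combined with the defining inequality of $Y'$, the leading-order cancellation $s-2(s-1)+(s-2)=0$, and error control via Lemmas \ref{lem:gauss_bnds} and \ref{lem:high_order_cancellation}. The only point where you go beyond the paper is the case $\dim(S\cap T)>0$ in (ii): the paper tacitly restricts to disjoint $S,T$ (the lemma is only ever applied to two members of a common $z$-spread in the proof of Theorem \ref{thm:main_der}), so your extra, and rather speculative, case analysis is not needed.
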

\begin{proof}
  By double counting $(Z, R)$, where $Z$ is a partial $z$-spread with $R, S \in Z$ with $R$ is disjoint to $S$, we see that $S$ is disjoint to at most $(w_S-1) \frac{n_1}{n_2}$ elements
  of $Y$. Hence, $S$ meets $|Y| - (w_S-1) \frac{n_1}{n_2}$ elements of $Y$.
  
  Similarly, double counting $(Z, R)$, where $Z$ is a partial spread of size $z$ with $S, T \in Z$
  and $R \in Y$ with $R$ is disjoint to $S$ and $T$, shows that $S$ and $T$ are disjoint to at most 
  $(s-2) \frac{n_2}{n_3}$ elements of $Y$. Hence, $S$ and $T$ meet at most
  \[ A := |Y| - (w_S + w_T - 2) \frac{n_1}{n_2} + (s-2) \frac{n_2}{n_3} \]
  elements of $Y$ simultaneously.
  What remains are some tedious calculations. 
  In the case of (i), where we ask for an upper bound, we use $w_S \leq s$.
  Then
  \begin{align*}
    |Y| - (w_S-1) \frac{n_1}{n_2} &\geq y - (s-1) \frac{q^{k^2} \gauss{n-k}{k}}{z-1}\\
    &= y - (s-1) q^{k^2} \frac{[n-k]}{[n-k-r]q^{k+r}} \gauss{n-k-1}{k-1}\\
    &\geq y - (s-1) q^{k^2-k} (1 + 2q^{k+r-n}) \gauss{n-k-1}{k-1}. \\
    \intertext{We continue using $q^{k^2-k} \gauss{n-k-1}{k-1} \leq \gauss{n-1}{k-1}$ and $\gauss{n-1}{k-1} = \frac{[n-1]}{[k-1]} \gauss{n-2}{k-2}$, so}
    |Y| - (w_S-1) \frac{n_1}{n_2}  &\geq \gauss{n-1}{k-1} \left( 1 - s\frac{[\ell-1][k-1]}{[n-1]} - 2(s-1) q^{k+r-n}\right)\\
    &\geq \gauss{n-1}{k-1} \left( 1 - 2s q^{\ell+k-n-2} - 2(s-1) q^{k+r-n} \right).
  \end{align*}

  Set $\delta = 4 \tau s^2 \frac{[k-1][\ell-1]}{[n-1]}$.
  For (ii), we use that $w_S, w_T > s - \delta$.
  We have that
  \begin{align*}
    A &= y - 2(s-1-\delta) \frac{q^{k^2}\gauss{n-k}{k}}{z-1} + (s-2) \frac{\left( q^{k^2}\gauss{n-k}{k} - [k] \gauss{n-1}{k-1} \right)}{z-2}\\
    &= y - s \frac{q^{k^2} \gauss{n-k}{k}}{z-1} + (s-2) \frac{q^{k^2} \gauss{n-k}{k}}{(z-1)(z-2)} - (s-2) \frac{[k] \gauss{n-1}{k-1}}{z-2} + 2\delta \frac{ q^{k^2} \gauss{n-k}{k}}{z-1}\\
    &\leq y - s q^{k^2-k} \gauss{n-k-1}{k-1} + 8s q^{(k-2)(n-k)} - (s-2) q^{(k-2)(n-k+1)+1} + 2\delta q^{k^2} \frac{\gauss{n-k}{k}}{z-1}.
    \intertext{As $\ell \geq 2$, $\tau s \geq q^r \geq 1$, and $\gauss{n-k}{k} \leq \frac{7}{2} q^{k(n-2k)}$, we obtain}
    &8s q^{(k-2)(n-k)} + 2\delta q^{k^2} \frac{\gauss{n-k}{k}}{z-1} \leq 8\tau s^2 q^{(k-2)(n-k)} + 112 \tau s^2 q^{(k-2)(n-k)+\ell-2} \leq  128 \tau s^2 q^{(k-2)(n-k)+\ell-2}.
    \intertext{Now Lemma \ref{lem:high_order_cancellation} together with $y \leq s\gauss{n-1}{k-1}$ shows}
    A &\leq 2 q^{(k-2)(n-k+1)+1} + 12 s q^{(k-2)(n-k+1)} + 128 \tau s^2 q^{(k-2)(n-k)+\ell-2}.
  \end{align*}
  The assertion follows.
\end{proof}

\begin{proof}[Proof of Theorem \ref{thm:main_der}]
  First we show that $Y$ contains $s$ intersecting families $\scrE_1, \ldots, \scrE_s$ such
  that $Y \setminus \bigcup_{i=1}^s \scrE_i$ is small.
  From this we then conclude that $Y \setminus \bigcup_{i=1}^s \scrE_i$ is actually empty.
  
  By Corollary \ref{cor:good_spread_ex}, there exists a $z$-spread $Z$ such that $|Y' \cap Z|=s$.
  Write $\{ S_1, \ldots, S_s\} = Y' \cap Z$. Let $\scrE_i$ denote the set of elements of $Y$ 
  which meet $S_i$ and are disjoint to any $S_j$ with $i \neq j$.
  By Lemma \ref{lem:Y_neighs},
  \begin{align*}
    |\scrE_i| &\geq \gauss{n-1}{k-1} \left( 1 - 2s q^{\ell+k-n-2} - 2(s-1) q^{k+r-n} \right)\\
    & ~~~ - (s-1) \left( 2 q^{(k-2)(n-k+1)+1} + 12 s q^{(k-2)(n-k+1)}  + 128\tau s^2 q^{(k-2)(n-k)+\ell-2} \right).
  \end{align*}
  In the following, we will bound the individual terms of the sum.
  
  Recall that $16 s \leq q^{\frac{n}{2} - k + 1}$, so $2s q^{\ell+k-n-2} \leq \frac{1}{8} q^{\ell-\frac{n}{2}-1}$.
  Particularly, $q^{\ell-2} < s$ implies that $\ell \leq \frac{n}{2} - k + \frac{5}{2}$.
  Hence, as $k \geq 2$, \[ 2s q^{\ell+k-n-2} \leq \frac{1}{8} q^{-k+\frac{3}{2}} < \frac{1}{8}.\]
  
  Next we bound $2(s-1) q^{k+r-n}$ using $16s \leq q^{\frac{n}{2}-k+1}$.
  We have $-\frac{n}{2}+r+1 \leq -k+\frac{r}{2}+1 \leq -\frac{k-1}{2}$. Hence, if $k \geq 3$ or $q \geq 4$, then
  \begin{align*}
    2(s-1) q^{k+r-n} \leq \frac{1}{8} q^{-\frac{k-1}{2}} \leq \frac{1}{16}.
  \end{align*}
  If $q \leq 3$ and $k=2$, then $n \geq 2k+5$ and $r \leq 1$. Hence,
  \begin{align*}
    2(s-1) q^{k+r-n} \leq \frac{1}{8} q^{-\frac{5}{2}} \leq \frac{1}{16}.
  \end{align*}

  We conclude that
  \begin{align*}
    \gauss{n-1}{k-1} \left( 1 - 2s q^{\ell+k-n-2} - 2(s-1) q^{k+r-n} \right) \geq \frac{13}{16} \gauss{n-1}{k-1}.
  \end{align*}
  
  Next we bound the remaining terms of the right hand side.
  As $16s \leq q^{\frac{n}{2}-k+1}$, we have that
  \begin{align*}
   2 (s-1) q^{(k-2)(n-k+1)+1} &\leq \frac{1}{8} q^{(k-2)(n-k+1)+\frac{n}{2}-k+2} \\
   &\leq \frac{1}{8} q^{(k-1)(n-k)-\frac{n}{2}+k} \leq \frac{1}{8} \gauss{n-1}{k-1}.
  \end{align*}
  
  Again, using $16 s \leq q^{\frac{n}{2}-k+1}$, we have that
    \begin{align*}
    12 s(s-1) q^{(k-2)(n-k+1)} &\leq \frac{3}{4} \cdot \frac{1}{16} q^{(k-2)(n-k+1)+n-2k+2} \\
    &= \frac{3}{64} q^{(k-1)(n-k)} \leq \frac{3}{64} \gauss{n-1}{k-1}.
  \end{align*}
  
  We distinguish between $\tau = 1$ and $\tau = q^{r+2-\ell}$.
  If $\tau = 1$, we have, using $16 s \leq q^{\frac{n-k-\ell+2}{3}}$,
  \begin{align*}
    128 (s-1)s^2 q^{(k-2)(n-k)+\ell-2} \leq \frac{1}{32} q^{(k-1)(n-k)} \leq \frac{1}{32} \gauss{n-1}{k-1}.
  \end{align*}
  If $\tau = q^{r+2-\ell}$, we have, using $16 s \leq q^{\frac{n-k-r}{3}}$,
  \begin{align*}
    128 \tau (s-1)s^2 q^{(k-2)(n-k)+\ell-2} = 128 (s-1)s^2 q^{(k-2)(n-k)+r} \leq \frac{1}{32} \gauss{n-1}{k-1}.
  \end{align*}
  Hence,
  \begin{align*}
    |\scrE_i| \geq \frac{39}{64} \gauss{n-1}{k-1}.
  \end{align*}
  We intend to show that $|\scrE_i| > 3 [k] \gauss{n-2}{k-2}$, so that we can apply Theorem \ref{thm:gen_stab}. Therefore, it suffices to show that
  \begin{align*}
    39 [n-1] > 3 \cdot 64 [k][k-1].
  \end{align*}
  This is implied by
  \begin{align*}
    39 (q-1)(q^{n-1}-1) > 192 (q^{2k-1}-1).
  \end{align*}
  If $q \geq 7$ and $n \geq 2k$, then $39 \cdot 6 \geq 192$ shows the inequality.
  If $q \leq 5$, then $n \geq 2k+3$. Hence, $39 (q-1)(q^{n-1}-1) \geq 39 (q^3-1)(q^{2k-1}-1) \geq 192(q^{2k-1}-1)$ shows the inequality.
  Hence, by Theorem \ref{thm:gen_stab}, $\scrE_i$ lies in a unique dictator or dual dictator $\scrE_i'$.
  
  We finish the proof by contradiction. Suppose that there exists a $T \in Y \setminus \bigcup_{i=1}^s \scrE_i'$.
   By Lemma \ref{lem:int_dict} (iii), we do know that at most $[k] \gauss{n-2}{k-2}$ elements of $\scrE_i$ meet $T$.
  First we consider the case that $n>2k$.
  Then, by Lemma \ref{lem:int_dict} (i), $|\scrE_i \cap \scrE_j| \leq \gauss{n-2}{k-2}$ for $i \neq j$.
  Hence, as $16s \leq q^{\frac{n}{2}-k+1}$, we have that
  \begin{align*}
    |\scrE_i| - s \gauss{n-2}{k-2} \geq \frac{39}{64} \gauss{n-1}{k-1} - s \gauss{n-2}{k-2} > 0.
  \end{align*}
  Hence, there exists an element $Z_i$ in each $\scrE_i \setminus \bigcup_{j \neq i} \scrE_j$ which is disjoint to $T$.
  Thus $\{ Z_1, \ldots, Z_s, T \}$ is a subset of $s+1$ pairwise disjoint elements in $Y$, a contradiction.
  
  For $n=2k$, by Lemma \ref{lem:int_dict} (ii), we can only guarantee that $|\scrE_i \cap \scrE_j| \leq \gauss{n-2}{k-1}$ for $i \neq j$.
  As $16s \leq q$ and $k \geq 2$, our estimate is
  \begin{align*}
    |\scrE_i| - s \gauss{n-2}{k-1} \geq \frac{39}{64} \gauss{n-1}{k-1} - s \gauss{n-2}{k-1} > 0.
  \end{align*}
  As before, this is a contradiction.
\end{proof}

\section{Cameron-Liebler Classes} \label{sec:cl}

Cameron-Liebler classes of $k$-spaces on $\FF_q^n$, which the author often refers to as Boolean degree $1$ functions of $k$-spaces on $\FF_q^n$ \cite{Filmus2018},
are well-investigated objects \cite{Blokhuis2018,Filmus2018,Rodgers2018}. In particular for the case $n=4$ and $k=2$ where they are known as Cameron-Liebler line classes. When $k$ divides $n$ (so a $z$-spread is simply a spread), one particular property of Cameron-Liebler classes is that their size is $s\gauss{n-1}{k-1}$ for some integer $s$ and that every spread intersects them in exactly $s$ elements \cite{Blokhuis2018}.
In the following, define $s$ by $|Y| = s \gauss{n-1}{k-1}$, even if $k$ does not divide $n$.
Theorem 4.9 in \cite{Blokhuis2018} claims a result similar to Theorem \ref{thm:main_cl}.
A minor, but sadly consequential sign-error in Lemma 4.6 of \cite{Blokhuis2018} makes the proof of Theorem 4.9 false in the stated form.
Below Lemma \ref{lem:fix_joz} provides a fix for Lemma 4.6 of \cite{Blokhuis2018}. We use this to show Theorem \ref{thm:main_cl}. We do not have to show anything for $n=2k$, as this case
is implied by Theorem \ref{thm:metsch_n_eq_2k}. We also do not have to show anything for $q \in \{ 2, 3, 4, 5 \}$
as in this case all Cameron-Liebler classes were classified in \cite{Filmus2018} for $(n, k) \neq (4, 2)$.

\begin{lemma}\label{lem:fix_joz}
  Let $n \geq 2k+1$ and $q \geq 7$.
  Let $Y$ be a Cameron-Liebler class of $k$-spaces on $\FF_q^n$ of size $s \gauss{n-1}{k-1}$. 
  If $s^3 \leq q^{n-2k-\tilde{r}+1 }$, where $n = \tilde{m} k - \tilde{r}$ with $0 \leq \tilde{r} < k$, then $Y$ contains at most $s$ pairwise disjoint $k$-spaces.
\end{lemma}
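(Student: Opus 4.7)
The plan is to proceed by contradiction: suppose $\{S_0,\ldots,S_s\}\subseteq Y$ is a partial spread of size $s+1$, and derive a contradiction from the Cameron-Liebler size $|Y|=s\gauss{n-1}{k-1}$ together with the hypothesis $s^3\le q^{n-2k-\tilde r+1}$. The workhorse is a double count of pairs $(Q,Z)$ where $Z$ is a $z$-spread containing $\{S_0,\ldots,S_s\}$ and $Q\in Y\cap Z$, combined with the defining averaging identity of Cameron-Liebler classes.

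The first step is to establish, from the spectral characterization of Cameron-Liebler classes as Boolean degree $1$ functions, that for any fixed partial spread $\mathcal{T}=\{T_1,\ldots,T_t\}$ the sum $\sum_{Z\supseteq\mathcal{T}}|Y\cap Z|$ equals $(s+\delta_t)\cdot N(\mathcal{T})$, where $N(\mathcal{T})$ is the number of $z$-spreads through $\mathcal{T}$ and $\delta_t$ is an explicit error term. When $k\mid n$, the characteristic vector of any spread is orthogonal to the Cameron-Liebler eigenspace, giving $\delta_t\equiv 0$; when $k\nmid n$, $\delta_t$ is nonzero but of controlled magnitude involving $[\tilde r]q^{k}/[n]$ and the partial-spread structure of $\mathcal{T}$. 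This is a conditional strengthening of Lemma~\ref{lem:spread_avg} that the Cameron-Liebler hypothesis allows.

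The second step applies this identity to $\mathcal{T}=\{S_0,\ldots,S_s\}$. Splitting the inner sum by the type of $Q$, either one of the $S_i$ or a new element of $Y$ disjoint from all of them, yields
\begin{align*}
(s+\delta_{s+1})\,n_{s+1}\;=\;(s+1)\,n_{s+1}+A\cdot n_{s+2},
\end{align*}
where $n_t$ is the number of $z$-spreads through $t$ pairwise disjoint $k$-spaces and $A\ge 0$ is the number of $Q\in Y$ disjoint from every $S_i$. Rearranging gives $A=(\delta_{s+1}-1)\,n_{s+1}/n_{s+2}$, so $A\ge 0$ forces $\delta_{s+1}\ge 1$. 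Since $n_{s+1}/n_{s+2}\asymp q^{n-2k-\tilde r}$ and $\delta_{s+1}$ is bounded by a low-degree expression in $s$ times $q^{-(n-k-\tilde r)}$, the hypothesis $s^3\le q^{n-2k-\tilde r+1}$ is calibrated precisely to force $|\delta_{s+1}|<1$, producing the required contradiction.

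The main obstacle is the first step: tracking both the sign and the magnitude of the conditional deviation $\delta_t$ when $k\nmid n$. In that case the characteristic vector of a $z$-spread is no longer orthogonal to the Cameron-Liebler eigenspace, so $\delta_t$ picks up an explicit term whose sign is delicate, and it is precisely a sign misstep in an analogous computation that the lemma is designed to correct in Lemma~4.6 of \cite{Blokhuis2018}. Once $\delta_t$ is handled with the correct sign, the exponent $n-2k-\tilde r+1$ in the hypothesis emerges cleanly from matching the orders of $n_{s+1}/n_{s+2}$ and $\delta_{s+1}$ against each other.
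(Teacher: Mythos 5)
Your proposal does not follow the paper's route, and it has a gap that I do not think can be repaired. The whole scheme rests on double counting pairs $(Q,Z)$ over $z$-spreads $Z$ containing all of $\{S_0,\ldots,S_s\}$. But a partial spread of size $s+1$ need not extend to \emph{any} $z$-spread; maximal partial spreads of many sizes exist, and the paper's introduction explicitly leans on this phenomenon (non-trivial Cameron--Liebler line classes may contain $s+1$ pairwise disjoint lines that do not extend to a spread, which is why they do not refute Conjecture \ref{conj:em_vs}). If $N(\mathcal{T})=0$ your identity is vacuous. Even when $N(\mathcal{T})>0$, the first step is not a ``conditional strengthening'' of Lemma \ref{lem:spread_avg}: every $Z\supseteq\mathcal{T}$ already satisfies $|Y\cap Z|\geq s+1$, so the conditional average is at least $s+1$ by fiat, and nothing in the Boolean-degree-$1$/spectral characterization controls the average of $|Y\cap Z|$ \emph{conditioned on containing a prescribed large partial spread} -- the orthogonality statement only governs the intersection with full spreads (when $k\mid n$) or the unconditional average. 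Moreover $N(\mathcal{T}\cup\{Q\})$ is not a configuration-independent constant $n_{s+2}$ for $s+2$ pairwise disjoint spaces, so the displayed equation $(s+\delta_{s+1})n_{s+1}=(s+1)n_{s+1}+A\,n_{s+2}$ is not justified. Finally, your explanation of where the exponent $n-2k-\tilde r+1$ comes from is not correct: $\tilde r$ does not arise from a ratio $n_{s+1}/n_{s+2}$ but from the modularity of the Cameron--Liebler parameter.

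The paper's proof is local and avoids spreads entirely. It invokes the counting inequality of \cite[Lemma 4.6]{Blokhuis2018}, which (assuming $\lfloor s\rfloor+1$ pairwise disjoint members of $Y$) compares the \emph{exact} number of elements of $Y$ disjoint from one fixed element -- an equivalent defining property of Cameron--Liebler classes -- against an upper bound, via $W_\Sigma$ and Equation \eqref{eq:nmb_disj}, on the number disjoint from two. The correction of the sign error consists of replacing $\gauss{n-1}{k-1}$ (whose coefficient is negative) by the \emph{upper} bound of Lemma \ref{lem:high_order_cancellation}, which reduces everything to $8(\lfloor s\rfloor-s+1)q^{n-2k+1}>6s^3$. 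The decisive input, absent from your sketch, is the divisibility theorem \cite[Theorem 2.9.4]{Blokhuis2018}, which forces $\lfloor s\rfloor-s+1\geq (q-1)q^{-\tilde r-1}$; this is the sole source of $\tilde r$ in the hypothesis. Without some substitute for that modular constraint, and without a counting framework that survives non-extendable partial spreads, your argument cannot reach the stated conclusion.
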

\begin{proof}
  As shown in \cite[Lemma 4.6]{Blokhuis2018}, this is equivalent to
  \begin{align*}
    \frac{(1-\lfloor s \rfloor)s \lfloor s \rfloor}{2} \gauss{n-1}{k-1} + (s-1)(\lfloor s \rfloor^2 - 1) q^{k^2-k} \gauss{n-k-1}{k-1}\\
    > \frac{(s-2) (\lfloor s \rfloor + 1) \lfloor s \rfloor}{2} W_\Sigma,
  \end{align*}
  where $W_\Sigma$ denotes the number of $k$-spaces through a point disjoint to two fixed, disjoint $k$-spaces. Note that this part of \cite[Lemma 4.6]{Blokhuis2018} requires that $n \geq 2k+1$, but not $n \geq 3k$ as required there.
  
  The coefficient of the first term is negative, so (this is the mistake in \cite[Lemma 4.6]{Blokhuis2018}), we can obtain 
  a sufficient condition by substituting $\gauss{n-1}{k-1}$ by the upper bound from Lemma \ref{lem:high_order_cancellation} for $q \geq 7$.
  We will bound $W_\Sigma$ with Equation \eqref{eq:nmb_disj}.
  Hence, it suffices that
  \begin{align*}
    \frac{(1-\lfloor s \rfloor)s \lfloor s \rfloor}{2} \left( q^{k^2-k} \gauss{n-k-1}{k-1} + \frac32 q^{1+(k-2)(n-k+1)} \right)\\ + (s-1)(\lfloor s \rfloor^2 - 1) q^{k^2-k} \gauss{n-k-1}{k-1}\\
    > \frac{(s-2) (\lfloor s \rfloor + 1) \lfloor s \rfloor}{2} q^{k^2-k} \gauss{n-k-1}{k-1}.
  \end{align*}
  Rearranging yields
  \begin{align*}
    8 \left( \lfloor s \rfloor - s + 1 \right) q^{k^2-k} \gauss{n-k-1}{k-1} > 6 \lfloor s \rfloor (\lfloor s \rfloor - 1) s q^{1+(k-2)(n-k+1)}.
  \end{align*}
  Hence, it suffices to guarantee
  \begin{align*}
   8 \left( \lfloor s \rfloor - s + 1 \right) q^{n-2k+1} > 6s^3.
  \end{align*}
  It is shown in \cite[Theorem 2.9.4]{Blokhuis2018} that $s[k]$ divisible by $[n]$. Hence, 
  $\lfloor s \rfloor - s + 1$ is at least $(q-1) q^{-\tilde{r}-1}$. The assertion follows using $q\geq 7$.
\end{proof}

Hence, using Theorem \ref{thm:main}, we obtain Theorem \ref{thm:main_cl}. 
We do not need the conditions $16s \leq q^{\frac{n}{2}-k+1}$ and $16s \leq q^{\frac{n-k-r}{3}}$
in Theorem \ref{thm:main_cl} as these are always implied by one of the other two bounds on $s$.

\section{Almost Counterexamples and Future Work} \label{sec:almost_cnt_ex}

One objective of this project was to find counterexamples to the natural Conjecture \ref{conj:em_vs}.
Obviously, we did not achieve this goal and it is left to future work. For $(n, k) = (4, 2)$, we have $(q^2+1)(q^2+q+1)$ lines.
The trivial upper bound is $s(q^2+q+1)$. 
By combining intersecting families, it is easy to obtain examples of size $s(q^2+q)+2$ for $s \leq 2q$.
This number is still very close to the trivial bound, so it seems unreasonable to find counterexamples in this range.
If we limit ourselves to $s \leq \frac{q^2+1}{2}$, so we take at most half of all lines, then
maybe the first plausible parameter to look at is $q=5$ with $s=11$.

Here we will provide one construction which shows that it is hard to extend the range of $s$ Theorem 
\ref{thm:main} significantly. The examples are limited to $(n, k) = (4, 2)$ for the sake of clarity.
We take an elliptic quadric $\scrQ$ in $\FF_q^4$. This consists of $q^2+1$ points, no three of which are collinear.
A line which contains two points of $\scrQ$ is called a secant. Let $Y$ be the family of all secants.
Clearly, $|Y| = \binom{q^2+1}{2} = \frac{q^2}{2} (q^2+1)$ and, if $q$ even, then $Y$ contains at most $\frac{q^2}{2}$ pairwise disjoint secants.
Hence, $s=\frac{q^2}{2}$. For sufficiently large $q$, it is not too hard to find a union $Y'$ of $\frac{q^2}{2}$ intersecting families 
with\footnote{Fix a line $\ell$ and a plane $\pi$ with $\ell$. Let $\scrP$ a set of $\frac{q^2}{2} - q$ points in $\pi \setminus \ell$. 
Let $Y'$ be the union of the set of lines in planes through $\ell$ and the set of all lines which contain a point of $\scrP$.
Then $|Y'| = q(q^2+q)+1 + (\frac{q^2}{2} - q) q^2 + q + 1 = \frac{q^4}{2} + q^2 + q + 2$.}
$|Y'| = \frac{q^2}{2} \cdot q^2 + q^2+q+2$. Here $|Y'| - |Y| = \frac{q^2}{2} + q + 2$.

There are several other similar constructions using quadric curves and related objects such as hyperovals, but we could never extend
them in a way that it disproves Conjecture \ref{conj:em_vs}. We could also not adapt any of the many constructions
for non-trivial Cameron-Liebler line classes for $(n, k) = (4, 2)$ to obtain such a counterexample. Our search here was surely very incomplete as for instance \cite{Gavrilyuk2018} and \cite{Rodgers2012} show that 
there are many such examples.

Furthermore, there are other classical geometrical structures for which the Erd\H{o}s Matching Conjecture
might be interesting. For instance, one can easily deduce the following using the same methods as in Theorem \ref{thm:main} for some universal constant $C$.
\begin{theorem}
  Let $n \geq 2k$ and $Y$ be an $s$-EM family of $k$-spaces in $\AG(n, q)$.
  If $C s \leq$ $\min\{$ $q^{\frac{n-k-\ell+2}{3}},$ $q^{\frac{n-k-r}{3}},$ $q^{\frac{n}2-k+1} \}$,
  then $Y$ is the union of $s$ intersecting families.
\end{theorem}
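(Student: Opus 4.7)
The plan is to mirror the proof of Theorem \ref{thm:main_der}, replacing each projective ingredient by its direct affine analog. An affine $k$-flat in $\AG(n,q)$ is a coset $v+V$ of a $k$-dimensional vector subspace, so there are $q^{n-k}\gauss{n}{k}$ such flats in total, $\gauss{n}{k}$ through any fixed point, and an intersecting family (flats sharing a common point) has size at most $\gauss{n}{k}$. A convenient way to recover all the required counts is to lift to $\FF_q^{n+1}$: affine $k$-flats correspond to $(k+1)$-dimensional vector subspaces of $\FF_q^{n+1}$ not contained in a fixed hyperplane at infinity, and two affine flats are disjoint exactly when their lifts meet inside that hyperplane. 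Under this correspondence the ratios $n_1/n_2$ and $n_2/n_3$, the number of affine $k$-flats disjoint from a given one, and the Beutelspacher-type bound for largest partial affine spreads all match their projective counterparts up to the same relative errors that the original argument already tolerates.

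With these analogs in place, the argument proceeds in the same four phases. First, averaging $|Y \cap Z|$ over a uniformly chosen maximum partial affine spread $Z$ yields the affine version of Lemma \ref{lem:spread_avg}, showing that $\EE|Y \cap Z|$ differs from $s$ by at most a bounded multiple of $s\tau[k-1][\ell-1]/[n-1]$. Second, a pigeonhole step produces a spread $Z$ with $|Y' \cap Z|=s$ and every $S \in Y' \cap Z$ heavy in the sense of Corollary \ref{cor:good_spread_ex}. Third, the double-counting of Lemma \ref{lem:Y_neighs} runs verbatim for each pair $S_i,S_j \in Y' \cap Z$, with the same Gaussian and $[\cdot]$ bounds provided by Lemma \ref{lem:gauss_bnds} and Lemma \ref{lem:high_order_cancellation}; the output is that the set $\scrE_i$ of $Y$-flats meeting $S_i$ but no other $S_j$ has size at least a constant fraction of $\gauss{n}{k}$, well above whatever stability threshold is needed.

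The last input is an affine stability theorem in the spirit of Theorem \ref{thm:gen_stab}: any sufficiently large intersecting family of $k$-flats in $\AG(n,q)$ must be contained in an affine dictator. This can be obtained either by adapting the Hsieh/Frankl--Wilson/Newman eigenvalue argument to the affine setting, or by embedding the family into the corresponding projective statement and ruling out the dual-dictator alternative, since the $k$-flats contained in an affine hyperplane do not in general pairwise meet. Granted this, the concluding contradiction of Theorem \ref{thm:main_der} carries over unchanged: any putative $T \in Y \setminus \bigcup_i \scrE_i'$ admits a disjoint partner in each $\scrE_i$, producing $s+1$ pairwise disjoint flats in $Y$. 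The main obstacle I expect is bookkeeping rather than substance, namely ensuring that the affine stability statement covers the small-$q$ and small-$n$ corner cases that are excluded from Theorem \ref{thm:gen_stab}; these can be absorbed into a larger universal constant $C$, which is exactly why the statement is given with an unspecified $C$ instead of the explicit factor $16$ appearing in Theorem \ref{thm:main}.
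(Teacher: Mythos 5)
Your plan is exactly what the paper intends: the statement is given there without proof, justified only by the remark that it follows ``using the same methods as in Theorem \ref{thm:main}'', and your outline (averaging over maximum partial spreads, the double counting of Lemma \ref{lem:Y_neighs}, then an affine stability theorem) is that method. However, the phrase ``match their projective counterparts up to the same relative errors'' conceals the two places where the affine setting genuinely differs, and both need an actual argument rather than an analogy.

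First, the quantities $n_1, n_2, n_3$ are meaningful in the projective case because $\PGL(n,q)$ is transitive on $k$-spaces and on pairs of disjoint $k$-spaces. The affine group is transitive on $k$-flats but \emph{not} on pairs of disjoint $k$-flats: $v+V$ and $w+W$ are disjoint iff $V+W \neq \FF_q^n$ and $v-w \notin V+W$, so disjoint pairs split into several orbits according to $\dim(V \cap W)$ (parallel pairs, pairs sharing a direction line, and so on), and a maximum partial affine spread --- which is any partition of the point set into $k$-flats, not only a parallel class --- passes through these types with different frequencies. The double counting behind Lemma \ref{lem:Y_neighs} therefore requires a uniform \emph{lower} bound on the number of maximum partial spreads through each type of disjoint pair (and triple), not merely the average value that transitivity would hand you; note also that for $n=2k$ only flats with $V \cap W \neq 0$ can be disjoint at all, so the counts there look quite different from the projective ones. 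Second, your stability input is incomplete as stated: lifting to $(k+1)$-spaces of $\FF_q^{n+1}$ and applying Theorem \ref{thm:gen_stab} (which needs $n \geq 2k+1$ for the lifted parameters) yields that a large intersecting family of $k$-flats is contained in the set of $(k+1)$-spaces through a fixed point $P$, but when $P$ lies in the hyperplane at infinity this is the set of flats with a fixed common direction vector --- which is not an affine dictator and not an intersecting family, so the final step of the proof of Theorem \ref{thm:main_der} (and even the conclusion ``union of $s$ intersecting families'') would break. This case can be excluded, e.g.\ because an intersecting subfamily of the common-direction family has size only about $\gauss{n-1}{k-1}$, far below the lower bound you establish for $\scrE_i$, but that needs to be said and proved. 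Neither obstacle looks fatal, and absorbing them into the unspecified constant $C$ is reasonable, but they are the substance of the proof rather than bookkeeping.
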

Here improvements on this bound might be easier compared to the investigated case
as spreads always exist.
Similarly, $k \times (n-k)$-bilinear forms over $\FF_q$ can be seen as the set of $k$-spaces 
which are disjoint to a fixed $(n-k)$-space \cite[\S9.5]{Brouwer1989}. Again, a analogous result is
easy to show.
\begin{theorem}
  Let $n \geq 2k$ and $Y$ be an $s$-EM family of $k \times (n-k)$-bilinear forms over $\FF_q$.
  If $C s \leq$ $\min\{$ $q^{\frac{n-k-\ell+2}{3}},$ $q^{\frac{n-k-r}{3}},$ $q^{\frac{n}2-k+1} \}$,
  then $Y$ is the union of $s$ intersecting families.
\end{theorem}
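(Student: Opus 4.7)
The plan is to port the argument of Theorem~\ref{thm:main_der} to the bilinear forms setting, exploiting that here \emph{spreads always exist}. Fix the ambient $(n-k)$-space $W$ and a complementary $k$-space $V_0$. Every $k$-space disjoint from $W$ is the graph of a unique linear map $V_0\to W$, and two such graphs are disjoint precisely when their difference has full rank $k$. A set of pairwise disjoint $k$-spaces in this family is therefore an MRD-code with minimum rank distance $k$, and Gabidulin's construction provides such codes of the Singleton-optimal size $z_{\mathrm{bil}}:=q^{n-k}$; these play the role of the $z$-spreads of Section~3, with the advantage that no divisibility condition on $n,k$ is needed.

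First I would redo the averaging of Lemma~\ref{lem:spread_avg} and Corollary~\ref{cor:good_spread_ex}. Write $\scrF$ for the set of $k$-spaces disjoint from $W$, so $|\scrF|=q^{k(n-k)}$, and note that a dictator inside $\scrF$ (the $k$-spaces in $\scrF$ through a fixed point outside $W$) has size $q^{(k-1)(n-k)}$. Defining $y_{\mathrm{bil}}$ analogously to $y$ with this dictator size in place of $\gauss{n-1}{k-1}$ and assuming $|Y|\ge y_{\mathrm{bil}}$, a uniformly random translated Gabidulin spread $Z\subset\scrF$ partitions $\scrF$ equitably, so $\EE(|Y\cap Z|)=|Y|\cdot q^{n-k}/|\scrF|$, and one obtains the analog of Lemma~\ref{lem:spread_avg} directly; in fact the Grassmann correction factor $[n-k-r]/[k]$ disappears, making the bounds slightly cleaner.

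Next I would prove the analog of Lemma~\ref{lem:Y_neighs} by the same double counting of pairs $(Z,R)$ with $R\in Y$ in a spread through one (resp.\ two) prescribed $k$-spaces of $Y'$ and disjoint from them. The relevant ratios $n_1/n_2$ and $n_2/n_3$ again have leading order $q^{k(n-2k)}$, and Lemma~\ref{lem:high_order_cancellation} supplies the needed cancellation in $q^{k(n-k)}-q^{k^2}\,|\{V\in\scrF:V\cap S=0\}|$. This produces, for each $S_i\in Y'\cap Z$, an intersecting family $\scrE_i\subset Y$ of size at least a constant fraction of $q^{(k-1)(n-k)}$, provided $Cs$ satisfies the three inequalities in the statement, for a universal $C$ that can be tracked from the constants accumulated above.

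The main obstacle I expect is the stability step: one needs a Hilton--Milner-type theorem for intersecting families inside $\scrF$, asserting that any intersecting family of size $\gtrsim [k]q^{(k-2)(n-k)}$ is contained in a dictator (or, when $n=2k$, possibly a dual dictator meeting $\scrF$). This should be accessible either by restricting Theorem~\ref{thm:gen_stab} to $\scrF$, using that any large intersecting subfamily of $\scrF$ still meets a Grassmann dictator or dual dictator in enough elements to trigger the Grassmann conclusion, or directly via the eigenvalue bound for the bilinear forms graph. Once this stability step is in place, the final contradiction---producing pairwise disjoint $Z_1,\ldots,Z_s,T$ in $Y$ from any hypothetical $T\notin\bigcup_i\scrE_i'$---is identical to that in Theorem~\ref{thm:main_der}.
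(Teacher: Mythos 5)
Your overall plan is exactly what the paper intends: the paper gives no proof of this theorem, asserting only that an analogous result follows by the methods of Theorem~\ref{thm:main}, and your proposal is a faithful port of the proof of Theorem~\ref{thm:main_der} --- identifying the $k$-spaces disjoint from a fixed $(n-k)$-space $W$ with matrices, replacing $z$-spreads by rank-metric spreads of size $q^{n-k}$, and reducing the stability step to Theorem~\ref{thm:gen_stab} by restriction to $\scrF$ (which works: the intersecting families produced have size a constant fraction of $q^{(k-1)(n-k)}$, the same order as $\gauss{n-1}{k-1}$, so the Grassmann stability theorem applies and its dual-dictator alternative is easily ruled out for $n>2k$).

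One step would fail as written. For the analogue of Lemma~\ref{lem:Y_neighs} you need the ratios $n_1/n_2$ and $n_2/n_3$ for your chosen family of spreads, and the double counting only yields the stated bounds if $n_2$ (resp.\ $n_3$) is a positive constant, or at least uniformly bounded below, over all disjoint pairs (resp.\ triples). If the family of spreads consists only of the $q^{(k-1)(n-k)}$ additive cosets of a single Gabidulin code $C$, then two disjoint elements $S,T$ lie in a common spread of the family if and only if $S-T\in C$, so $n_2\in\{0,1\}$ and the ratio $n_1/n_2$ is degenerate; the resulting bound on the number of elements of $Y$ disjoint from both $S$ and $T$ collapses. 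You must instead take the orbit of one MRD spread under the full automorphism group of the bilinear forms graph (translations together with $\mathrm{GL}_k\times\mathrm{GL}_{n-k}$), which is transitive on ordered disjoint pairs, giving $n_1/n_2=\bigl(\text{number of rank-}k\ k\times(n-k)\text{ matrices}\bigr)/(q^{n-k}-1)$, of order $q^{(k-1)(n-k)}$ rather than the $q^{k(n-2k)}$ you state; for $n_2/n_3$ one additionally needs a uniform positive lower bound on $n_3$ over pairwise disjoint triples, which is a routine but not automatic verification (the same subtlety is present, and glossed over via citations, in the Grassmann case). With that repair the remainder of your argument, including the final contradiction producing $s+1$ pairwise disjoint members of $Y$, goes through as in Theorem~\ref{thm:main_der}.
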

The trivial bound here is $s \left( \gauss{n-1}{k-1} - \gauss{n-2}{k-2} \right)$ 
(instead of $s\gauss{n-1}{k-1}$ for vector spaces) which can 
be easily obtained for all $s \leq [k]$.
It might be easier to find counterexamples to the natural variation of Conjecture \ref{conj:em_vs}
in affine spaces or bilinear forms.

Recall that the statement of Theorem \ref{thm:main_cl} is empty for 
$2k < n < \frac{5}{2} k$. We believe that this range can be covered by using a better estimate than Lemma \ref{lem:high_order_cancellation} and a better upper bound on $|W_\Sigma|$ in Lemma \ref{lem:fix_joz}. More precisely, in these lemmas we compare Gaussian coefficients by their largest terms 
(seen as polynomials in $q$), while more terms cancel. Indeed, this happens for the $n=2k$ proof in \cite{Metsch2017}.

\section{Acknowledgements}

The author thanks the referees for their detailed comments.
The author is supported by a postdoctoral fellowship of the Research Foundation - Flanders (FWO).





\end{document}